\documentclass{gtpart}     
%
%
\usepackage{epsfig,color} 
\usepackage{mathtools,amsmath}
\usepackage[utf8]{inputenc}
\usepackage[T1]{fontenc}

\title{Acylindrical hyperbolicity and \\ Artin-Tits groups of spherical type}

\author{Matthieu Calvez}
\givenname{Matthieu}
\surname{Calvez}
\address{Matthieu Calvez, 
Departamento de Matem\'{a}tica y Estad\'istica , Universidad de La Frontera, Francisco Salazar 1145, Temuco, Chile}
              \email{matthieu.calvez@ufrontera.cl}

\author{Bert Wiest}
\givenname{Bert}
\surname{Wiest}
\address{Bert Wiest, UFR Math\'ematiques, Universit\'e de Rennes 1, 35042 Rennes Cedex, France}
\email{bertold.wiest@univ-rennes1.fr}


\keyword{xxx}
\keyword{yyy}
\keyword{zzz}
\subject{primary}{msc2010}{20F65}
\subject{primary}{msc2010}{20F36}
\subject{secondary}{msc2010}{20F10}



%
%
\volumenumber{}
\issuenumber{}
\publicationyear{}
\papernumber{}
\startpage{}
\endpage{}
\doi{}
\MR{}
\Zbl{}
\received{}
\revised{}
\accepted{}
\published{}
\publishedonline{}
\proposed{}
\seconded{}
\corresponding{}
\editor{}
\version{}

\makeautorefname{notation}{Notation}%

\newtheorem{theorem}{Theorem}[section]
\newtheorem{lemma}[theorem]{Lemma} 
\newtheorem{proposition}[theorem]{Proposition}
\newtheorem{corollary}[theorem]{Corollary}

%
\theoremstyle{definition}
\newtheorem{definition}[theorem]{Definition}    
\newtheorem{remark}[theorem]{Remark}

\newtheorem{example}[theorem]{Example}

%
%

\def\N{{\mathbb N}}
\def\Z{{\mathbb Z}}

\def\wedgeR{{\wedge^{\hspace{-0.7mm}\Lsh\hspace{0.7mm}}}}
\def\veeR{{\vee^{\hspace{-0.7mm}\Lsh\hspace{0.7mm}}}}

\renewcommand{\phi}{\varphi}


\begin{document}

\begin{abstract}
We prove that, for any irreducible Artin-Tits group of spherical type~$G$, the quotient of~$G$ by its center is acylindrically hyperbolic. This is achieved by studying the additional length graph associated to the classical Garside structure on~$G$, and constructing a specific element $x_G$ of~$G/Z(G)$ whose action on the graph is loxodromic and WPD in the sense of Bestvina-Fujiwara; following Osin, this implies acylindrical hyperbolicity.  
Finally, we prove that ``generic'' elements of~$G$ act loxodromically, where the word ``generic'' can be understood in either of the two common usages: as a result of a long random walk or as a random element in a large ball in the Cayley graph.
\end{abstract}

\maketitle

\section{Introduction}

The class of acylindrically hyperbolic groups has attracted a lot of interest in the last few years: it is restrictive enough to admit several powerful theorems and wide enough to include many relevant families of groups. Osin's survey~\cite{osin} presents various equivalent definitions of acylindrical hyperbolicity and provides a lot of examples and properties. 

An isometric action of a group $G$ on a metric space $X$ is said to be \emph{acylindrical} if for every $k>0$, there exist $R,N>0$ such that 
for any two points which are at distance at least~$R$, there are at most $N$ elements of~$G$ whose action moves each of the two points by a distance of at most~$k$.
A non-virtually cyclic group $G$ is said to be \emph{acylindrically hyperbolic} (a.h.) if it admits an acylindrical action on a Gromov-hyperbolic space in which at least one element of $G$ acts loxodromically (i.e. the orbit of every point is quasi-isometric to the integers). 

Beyond the class of non-elementary hyperbolic groups, a prominent example of an a.h. group is the mapping class group of a closed surface of genus $g$ with $p$ punctures (except in a few sporadic cases): it has an acylindrical action~\cite{Bowditch08} on the curve graph, which is a Gromov-hyperbolic space~\cite{MM1}. Other examples include the outer automorphism group of a free group~\cite{bestvinafeighn}, and non-cyclic, non-directly decomposable RAAGS \cite{Koberda1,Koberda}. 

The main goal of the present paper is to add the class of irreducible Artin-Tits groups of spherical type (modulo their center) to this list. This will be achieved by studying the natural action of an irreducible Artin-Tits group of spherical type on the additional length graph associated to its classical Garside structure. 

 
Let us briefly recall what that means. 
\emph{Artin-Tits groups of spherical type} (or generalized braid groups) are a generalization of the classical Artin braid group: it was shown in \cite{BS,Del} that they enjoy a combinatorial structure analogue to that first discovered for braids by Garside \cite{Garside} (the \emph{classical Garside structure} of the braid group).  More generally, \emph{Garside groups} can be roughly defined as groups satisfying analogues of Garside's combinatorial properties \cite{DehornoyParis,DehornoyGarside,DDGKM}. A prominent feature of Garside groups is the biautomatic structure, provided by normal forms which allow to solve the word and conjugacy problems.

Motivated by the example of the classical Artin braid group (which is a close relative of the mapping class group of a punctured sphere) acting on the curve graph of the punctured sphere, the authors provided in~\cite{CalvezWiest} a combinatorial construction of the \emph{additional length graph} associated to any Garside group~$G$, equipped with a Garside structure of  finite type. 

This additional length graph $\mathcal C_{AL}(G)$ is a connected, Gromov-hyperbolic graph which admits an isometric action of~$G$. In the special case where $G$ is the Artin braid group endowed with its classical Garside structure, it was shown in~\cite{CalvezWiest} that $\mathcal C_{AL}(G)$ has infinite diameter: this was accomplished by exhibiting a family of braids which act loxodromically on $\mathcal C_{AL}$; on the other hand, periodic and reducible braids were shown to act elliptically on the additional length graph. 

It was left as an open problem in~\cite{CalvezWiest} to give conditions on the Garside group $G$ for its additional length graph to have infinite diameter. Here we give a partial answer, settling completely the case of Artin-Tits groups of spherical type (endowed with the classical Garside structure):

\begin{theorem}\label{T:InfDiam}
The additional length graph $\mathcal C_{AL}(G)$ associated to the classical Garside structure of an Artin-Tits group of spherical type $G$ has infinite diameter if and only if $G$ is irreducible.
\end{theorem}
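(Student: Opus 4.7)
The statement is a biconditional, so I handle the two implications separately.

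\medskip

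\emph{Irreducible $\Rightarrow$ infinite diameter.} This implication comes for free from the main construction of the paper: when $G$ is an irreducible Artin--Tits group of spherical type, one exhibits an explicit element $x_G \in G/Z(G)$ whose action on $\mathcal C_{AL}(G)$ is loxodromic. A loxodromic isometry of a Gromov hyperbolic graph has orbits quasi-isometric to $\Z$, so in particular the graph has infinite diameter. No additional argument is needed in this direction once $x_G$ has been produced.

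\medskip

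\emph{Reducible $\Rightarrow$ bounded diameter.} Suppose $G = G_1 \times G_2$ with both $G_i$ nontrivial spherical-type Artin--Tits groups. The classical Garside structure of $G$ splits as a product of those of $G_1$ and $G_2$: the Garside element is $\Delta = \Delta_1 \Delta_2$, the lattice of simple elements is the direct product of the lattices of $G_1$ and $G_2$, and greedy normal forms split coordinate-wise. I plan to show that every vertex of $\mathcal C_{AL}(G)$ lies within bounded distance of the identity vertex, with the bound depending only on $G_1$ and $G_2$. Writing $g = g_1 g_2$ with $g_i \in G_i$, the key ingredient is that $\Delta_1$ commutes with the whole of $G_2$ and $\Delta_2$ commutes with the whole of $G_1$; in particular the cyclic subgroups $\langle \Delta_i \rangle$ are "pseudo-central" and should act on $\mathcal C_{AL}(G)$ with uniformly bounded orbits. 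Concretely, I would show that multiplication of $g$ by $\Delta_1^{\pm 1}$ (or $\Delta_2^{\pm 1}$) amounts to an ``absorbable'' move in the sense of \cite{CalvezWiest}, contributing at most a bounded number of additional-length edges; alternating such moves in $G_1$ and $G_2$ then allows one to peel the normal form of $g$ off one simple factor at a time while remaining in a uniformly bounded ball about the basepoint.

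\medskip

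\emph{Main difficulty.} The obstacle I expect is quantitative: the commutation $[\Delta_1, G_2]=1$ is immediate, but converting it into an actual bound on the number of edges in $\mathcal C_{AL}(G)$ requires a careful analysis of how the absorbable-element condition from \cite{CalvezWiest} interacts with the direct-product decomposition of the simple lattice. Specifically one must verify that the additional length of $g$ --- roughly, the portion of its normal form not captured by the centre --- is bounded by a function of the ranks of $G_1$ and $G_2$ alone, uniformly in the canonical length of $g$. This is exactly the feature that fails in the irreducible case, where no proper subgroup of $G$ has a Garside element that is simultaneously non-central in $G$ and central in a nontrivial complementary factor, which is precisely what the loxodromic element $x_G$ must exploit.
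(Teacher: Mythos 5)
Your overall architecture matches the paper's: the reducible case is handled by showing every element is a bounded product of absorbable elements, and the irreducible case by producing a loxodromic element. However, in the hard direction you have not actually proved anything. Saying that the implication ``comes for free from the main construction of the paper'' defers the entire content of the theorem: the existence of an element acting loxodromically on $\mathcal C_{AL}(G)$ for irreducible $G$ \emph{is} the statement that the graph has infinite diameter, and establishing it occupies all of Section~\ref{S:Lox} --- the construction of $x_G$ with its special normal form (via the Gebhardt--Tawn transitivity results), the verification that $x_G$ and $\partial x_G$ are rigid and non-absorbable, the projection $\lambda$ onto the axis $\{x_G^k\Delta^{\Z}\}$, the preferred-path Proposition~\ref{P:Main}, and finally the lower bound $d_{AL}(1,x_G^N\Delta^{\Z})\geqslant N/2$. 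None of this is supplied or even sketched, so the forward implication is a genuine gap, not a routine consequence.

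In the reducible direction your strategy is in the right spirit but the proposed mechanism is off in two ways. First, the relevant fact is not that $\Delta_1$ commutes with $G_2$; it is that $\inf$ and $\sup$ in a direct product are the coordinatewise min and max, so that a \emph{positive} element supported entirely in one factor is absorbable \emph{in a single step} by a sufficiently high power of an atom of another factor (e.g.\ $(g_1,1)$ with $g_1\in G_1^0$ is absorbed by $(1,a_2^{\sup(g_1)})$). Combined with the mixed canonical form $n^{-1}p$ in each factor, this writes an arbitrary element as a product of a \emph{bounded} number of absorbable elements, giving $\mathrm{diam}\leqslant 2m$ (or $\leqslant 5$ for two factors). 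By contrast, ``peeling the normal form off one simple factor at a time'' uses $\ell(g)$ moves and yields no uniform bound --- which is exactly the difficulty you flag but do not resolve. Second, your case setup (``both $G_i$ nontrivial spherical-type Artin--Tits groups'') glosses over the boundary cases that actually matter: when there are exactly two irreducible components one of them may be cyclic, and then $(\Delta_1^k,1)$ must be split as a product of two absorbable pieces using the fact that the non-cyclic factor has at least two atoms. This is not a pedantic point: $\mathcal C_{AL}(\Z^2)$ has \emph{infinite} diameter, which is precisely why the paper's convention on Coxeter matrices excludes free abelian groups; any correct proof of the reducible direction has to see why that case is different.
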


The ``only if part" of Theorem~\ref{T:InfDiam} results from an easy computation, see Proposition~\ref{P:OnlyIf}. The strategy to show the reverse implication is quite similar to the proof of \cite[Theorem 3.5] {CalvezWiest}. It is based on the construction (Proposition~\ref{P:Lox}) of a specific element~$x_G$ of~every irreducible $G$ whose action on the graph $\mathcal C_{AL}(G)$ is loxodromic. 

The center of an irreducible Artin-Tits group of spherical type is known to be cyclic, generated by a distinguished element $\Omega$. The element $\Omega$ turns out to act trivially on $\mathcal C_{AL}(G)$, hence the action of $G$ descends to an action of $G/Z(G)$. In this context we shall prove:

\begin{theorem}\label{T:WPD}
For each irreducible Artin-Tits group of spherical type endowed with its classical Garside structure, the element $x_G\in G/Z(G)$ acts WPD on the additional length graph $\mathcal C_{AL}(G)$.
\end{theorem}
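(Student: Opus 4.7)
Since Proposition~\ref{P:Lox} already provides loxodromicity, the task is to verify the second clause of the WPD definition: fixing $\epsilon>0$ and a basepoint $v$ (chosen as a vertex close to the quasi-axis of $x_G$), we must exhibit $N>0$ such that the set
\[
F_N(\epsilon) \;=\; \{\,h\in G/Z(G)\ :\ d(v,hv)\leq\epsilon,\ d(x_G^N v,\ h x_G^N v)\leq\epsilon\,\}
\]
is finite. First I would fix the Gromov hyperbolicity constant $\delta$ of $\mathcal{C}_{AL}(G)$ (from \cite{CalvezWiest}) and the translation length $\tau>0$ of $x_G$, and set up the orbit points $v_n:=x_G^n v$ so that $(v_n)_{n\in\Z}$ is a quasi-geodesic in $\mathcal{C}_{AL}(G)$.

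The first step is a purely hyperbolic reduction. A standard thin-quadrilateral argument shows that, for $N$ large enough, any $h\in F_N(\epsilon)$ must send the quasi-geodesic segment $v_0,v_1,\ldots,v_N$ into a uniform $O(\epsilon+\delta)$-neighbourhood of itself; consequently $h$ acts on the axis as an approximate translation by $k\tau$ for some integer $k$ with $|k|\leq C(\epsilon,\delta,\tau)$. Replacing $h$ by $x_G^{-k}h$, one reduces to the case where $h$ moves each of $v_0,v_1,\ldots,v_{N'}$ by at most $R:=R(\epsilon,\delta)$, for some $N'=N'(N,\epsilon,\delta)$ still tending to infinity with~$N$.

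The heart of the argument is then to show that the set of $h\in G/Z(G)$ which $R$-fellow-travel a long initial segment of the orbit of $x_G$ is finite. Here I would exploit the combinatorial definition of $\mathcal{C}_{AL}(G)$ from \cite{CalvezWiest}: being at bounded $\mathcal{C}_{AL}$-distance from each $v_n$ imposes strong restrictions on how $h$ distorts the canonical Garside factorisation of $x_G^n$. Combined with the rigidity properties expected of $x_G$ (its canonical length should grow linearly in $n$, with a stable rigid pattern of initial/final factors coming from the construction in Proposition~\ref{P:Lox}), this should force $h$ to coincide with the identity on arbitrarily long prefixes of the normal form of the $x_G^n$, up to the bounded ambiguity absorbed by the cyclic centre $\langle\Omega\rangle$. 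Finiteness of $F_N(\epsilon)$ then follows.

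The main obstacle lies precisely in the last paragraph: converting the geometric condition ``stays close to many orbit points'' into the algebraic conclusion ``agrees with some $x_G^k$ modulo finitely many possibilities'' requires a careful quantitative reading of the distance estimates in $\mathcal{C}_{AL}(G)$ together with fine Garside-theoretic control on the iterates $x_G^n$ --- in particular, a statement asserting that two long Garside normal forms which look $R$-similar from the point of view of $\mathcal{C}_{AL}(G)$ share most of their simple factors. Once this quantitative rigidity is established, the Bestvina--Fujiwara/Osin machinery immediately yields acylindrical hyperbolicity of $G/Z(G)$.
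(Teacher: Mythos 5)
Your outline correctly identifies the two-stage shape of the argument (a geometric reduction to elements that track a long segment of the $x_G$-orbit, followed by a combinatorial finiteness statement), and your reduction to a single basepoint matches the paper's, which passes from an arbitrary vertex $v$ to the identity vertex by a triangle inequality at the cost of replacing $\kappa$ by $\kappa+2d_{AL}(1,v)$. However, the step you yourself flag as ``the main obstacle'' --- converting ``$h$ stays $R$-close to many orbit points'' into ``$h$ is a bounded power of $x_G$ modulo finitely many corrections'' --- is precisely the content of the theorem, and it is left unproved in your proposal. Moreover, the statement you propose to establish it with (``two long Garside normal forms which look $R$-similar from the point of view of $\mathcal{C}_{AL}(G)$ share most of their simple factors'') is not obviously true and is not what the paper proves: coarse closeness in $\mathcal{C}_{AL}$ is far too weak to control individual Garside factors, since a single edge of $\mathcal{C}_{AL}$ can be labelled by an arbitrary absorbable element.

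The paper avoids this difficulty by working with \emph{exact} combinatorial data rather than coarse comparisons. The key points are: (1) the projection $\lambda$ onto the axis $\{X^k\}$ is coarsely Lipschitz (Proposition~\ref{P:Lipschitz}, itself resting on the lower bound of Proposition~\ref{P:XLox}); (2) Proposition~\ref{P:Main} shows that the preferred path $A(u,w)$ between two vertices with $\lambda(w)-\lambda(u)\geq 3$ literally \emph{contains} the axis subpath $A(X^{\lambda(u)+1},X^{\lambda(w)-1})$ --- containment as an actual subpath, not mere fellow-travelling; (3) preferred paths are $G$-equivariant with edge labels preserved up to $\tau$ (Lemma~\ref{L:Equivariant}), so applying $\gamma$ to $A(1,X^N)$ and comparing with $A(u,w)$ forces an exact equality of long tuples of simple factors of $x_G$ against a cyclic shift of themselves; and (4) condition (ii) of Proposition~\ref{P:Lox} --- that the first and last factors of $x_G$ are the \emph{only} single-atom factors --- rules out any nontrivial shift, pinning $\gamma$ down to $x_G^{\lambda(u)+1}\tau^{-j}(x_G^{-m})\Delta^j$ with $m$ ranging over a bounded set. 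It is exactly this interplay between the canonical (preferred-path) structure of $\mathcal{C}_{AL}$ and the tailor-made normal form of $x_G$ that your proposal is missing; without it, the finiteness claim does not follow from hyperbolic geometry alone.
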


Here, the property referred to is the so-called \emph{Weak Proper Discontinuity} introduced by Bestvina-Fujiwara in~\cite{BestvinaFujiwara02} (we refer to Section~\ref{S:Acylindrical} for the definition). In other words, as $\mathcal C_{AL}(G)$ is Gromov-hyperbolic, Theorem~\ref{T:WPD} says that $G/Z(G)$ satisfies Property (AH3) of Theorem 1.2 in Osin's paper~\cite{osin}. This can be restated in the following way: 

\begin{theorem}\label{T:Acylindrical}
If $G$ is an irreducible Artin-Tits group of spherical type, then $G/Z(G)$ is acylindrically hyperbolic. 
\end{theorem}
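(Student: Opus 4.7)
The plan is to deduce Theorem~\ref{T:Acylindrical} as a formal consequence of Theorem~\ref{T:WPD} via Osin's characterization of acylindrical hyperbolicity. Indeed, \cite[Theorem 1.2]{osin} provides the equivalence (AH1)$\Leftrightarrow$(AH3): a non-virtually cyclic group is acylindrically hyperbolic precisely when it admits an isometric action on some Gromov-hyperbolic space with at least one loxodromic element satisfying the WPD property. The previous theorems of the paper have been arranged to supply exactly these ingredients: the graph $\mathcal C_{AL}(G)$ is Gromov-hyperbolic and carries an isometric $G$-action (from~\cite{CalvezWiest}) which descends to $G/Z(G)$ because the central generator $\Omega$ acts trivially, and by Theorem~\ref{T:WPD} the element $x_G\in G/Z(G)$ is loxodromic and WPD.

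The only point that then remains to be verified is that $G/Z(G)$ is not virtually cyclic, since this condition is built into the definition of acylindrical hyperbolicity used in the paper. My plan here is to exploit the action on $\mathcal C_{AL}(G)$ itself. Since $\mathcal C_{AL}(G)$ has infinite diameter by Theorem~\ref{T:InfDiam}, and since the elementary closure $E(x_G)$ of a loxodromic WPD element is a virtually cyclic subgroup of $G/Z(G)$ (a standard consequence of WPD, see \cite{BestvinaFujiwara02, osin}), one may choose an element $h\in G$ moving some vertex far from the axis of $x_G$; then $h$ lies outside $E(x_G)$, so $hx_Gh^{-1}$ and $x_G$ are two independent loxodromic elements, which forces $G/Z(G)$ to be non-virtually cyclic. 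The rank-one type $A_1$ is the only degenerate case in which $G/Z(G)$ is trivial, and is implicitly excluded.

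I do not anticipate any genuine obstacle in carrying out this plan: the theorem is a repackaging of Theorem~\ref{T:WPD} through Osin's dictionary, and the only minor step requiring attention is the non-virtual cyclicity check just sketched. All the substantive work has already been invested upstream, in the constructions of $x_G$ and of the additional length graph, and in the verification of the loxodromic and WPD properties.
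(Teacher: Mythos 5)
Your proposal is correct and follows essentially the same route as the paper: Theorem~\ref{T:Acylindrical} is deduced from Theorem~\ref{T:WPD} by invoking the equivalence of (AH3) with acylindrical hyperbolicity in \cite[Theorem 1.2]{osin}. The one point where you go beyond the paper is in explicitly checking that $G/Z(G)$ is not virtually cyclic; this care is welcome, but be aware that the justification as sketched is not quite sufficient --- the infinite diameter of $\mathcal C_{AL}(G)$ does not by itself produce an element $h$ moving a vertex far off the quasi-axis of $x_G$ (the axis itself already has infinite diameter, and a priori the whole graph could lie in a bounded neighbourhood of it), so it is cleaner to argue algebraically, e.g.\ that $G/Z(G)$ contains a nonabelian free subgroup (already for the rank-two parabolic generated by two atoms $a,b$ with $m_{ab}\geqslant 3$), or that $x_G$ together with a conjugate by an element not commuting with it generates a non-elementary subgroup.
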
 

Finally, we deduce some consequences regarding the genericity of loxodromically-acting elements in an irreducible Artin-Tits group of spherical type $G$. There are two commonly used ways of  ``picking an element of~$G$ at random''. The first is to make a long random walk in the Cayley graph of~$G$, the second is to pick a random point from a large ball centered on the neutral element in the Cayley graph of~$G$. We prove that with either of these two notions, the probability of picking an element which acts loxodromically on~$\mathcal C_{AL}$ tends to~$1$ exponentially quickly as the length of the random walk, or the size of the ball respectively, tends to infinity. The main tools used in these proofs come from \cite{SistoGeneric} in the random-walk setting, and from \cite{CarusoWiestGeneric2} and \cite{WiestLox} in the large-ball setting.

The paper is organized as follows: in Section~\ref{S:Reminders and first results} we recall some basic facts about Garside groups, Artin-Tits groups, and the additional length graph. In Section~\ref{S:Lox} we construct the element $x_G$ and prove Theorem~\ref{T:InfDiam}. Section~\ref{S:Acylindrical} is devoted to the proof of Theorems~\ref{T:WPD} and~\ref{T:Acylindrical}, and Section~\ref{S:Generic} contains the results on generic elements of~$G$.
%

%

\section{Reminders}\label{S:Reminders and first results}

In this section we recall the concepts and results needed in the sequel. Except Section~\ref{S:CAL}, where we recall from~\cite{CalvezWiest} the construction of the additional length graph and discuss its diameter in the case of an irreducible Artin-Tits group of spherical type, all results are well-known and may be skipped by the experts.

\subsection{Garside theory}\label{S:Garside}
The reader is referred to~\cite[Section 2.1]{GebhardtGM} for a particularly simple introduction to Garside groups containing all what we need in this paper;
other more advanced references are~\cite{DehornoyParis},\cite{DehornoyGarside} and the book~\cite{DDGKM}.
A (\emph{finite-type}) \emph{Garside structure} for a group $G$ is the data of a submonoid $P$ of $G$ (the monoid of \emph{positive elements}) such that $P\cap P^{-1}=\{1\}$, together with a special element $\Delta$ of $P$ (the \emph{Garside element}) which enjoy the following properties: 
\begin{itemize}
\item The partial order $\preccurlyeq$ defined on $G$ by $x\preccurlyeq y$ if and only if $x^{-1}y\in P$ is a lattice order (invariant under left multiplication) called \emph{prefix order}: every two elements $x,y$ of $G$ admit a unique least common multiple (lcm) $x\vee y$ and a unique greatest common divisor (gcd) $x\wedge y$. 
\item The set of \emph{simple elements}: $\{s\in G: 1\preccurlyeq s \preccurlyeq \Delta\}$ (is \emph{finite} and) generates $G$. 
\item Conjugation by $\Delta$ preserves $P$ and hence $\preccurlyeq$. 
\item $P$ is noetherian: every non trivial element $x\in P$ satisfies: 
$$\sup\{k\in \mathbb N,\ x=a_1\ldots a_k,\  a_i\in P-\{1\}\}<\infty.$$
\end{itemize}
In particular, $P$ is atomic: there are elements $a\in P$ such that $a=uv$ and $u,v\in P$ imply $u=1$ or $v=1$; these elements are called \emph{atoms}. The set of atoms (is \emph{finite} and) generates $G$.

A group equipped with a (finite-type) Garside structure is termed \emph{Garside group (of finite type)}. In the sequel, we consider a Garside group $G$ with finite-type Garside structure $(G,P,\Delta)$. We denote by $\tau$ the conjugation by $\Delta$: for every $x\in G$, $\tau(x)=\Delta^{-1} x \Delta$. Analogue to the prefix order is the \emph{suffix order} $\succcurlyeq$: $x\succcurlyeq y\Leftrightarrow xy^{-1}\in P$; this is also a lattice order (invariant under right multiplication) and the associated gcds and lcms are denoted respectively by $\wedgeR$ and $\veeR$. Simple elements are simultaneously the positive prefixes and suffixes of $\Delta$; to each non-trivial simple element $s$ are associated its \emph{right complement} $\partial s$ and its \emph{left complement} $\partial^{-1}s$: both are simple elements defined respectively by $\partial s=s^{-1}\Delta$ and $\partial ^{-1}s=\Delta s^{-1}$. 
Conjugation by $\Delta$ induces a permutation of the set of simple elements, which is finite and generates $G$; it follows that $\tau$ is a finite order automorphism of $G$. 
We denote by~$o(G)$ the order of~$\tau$; thus $\Omega=\Delta^{o(G)}$ is the least central power of~$\Delta$. 

An ordered pair of simple elements $(s_1,s_2)$ is said to be \emph{left-weighted} if $\partial s_1\wedge s_2=1$ and \emph{right-weighted} if $s_1\wedgeR \partial^{-1}s_2=1$. Using this definition, one can define the left and right normal forms for any element $x\in G$. Let $x\in G$. The \emph{supremum} of $x$ is $\sup(x)=\min\{k\in \Z,\ x\preccurlyeq \Delta^k\}$, the \emph{infimum} of $x$ is $\inf(x)=\max\{k\in Z,\ \Delta^k\preccurlyeq x\}$ and the \emph{canonical length} of $x$ is $\ell(x)=\sup(x)-\inf(x)$. 
Elements with zero canonical length are exactly the powers of the Garside element $\Delta$. 
Each $x\in G$ with $\ell(x)\geqslant 1$ can be uniquely written $x=\Delta^{\inf(x)} s_1\ldots s_{\ell(x)}$, where $s_i$ are non-trivial simple elements, $s_1\neq \Delta$, so that each pair of consecutive factors is left-weighted. This is the \emph{left normal form} of $x$. The \emph{right normal form} is defined similarly: it is the unique way of writing $x=s'_{\ell(x)}\ldots s'_{1}\Delta^{\inf(x)}$ with the $s'_i$ are non-trivial simple elements, $s'_1\neq \Delta$, so that each pair of consecutive factors is right-weighted. 

For a given element $x\in G$ of canonical length at least 1 and left normal form $\Delta^px_1\ldots x_r$, we define $\iota(x)=\tau^{-p}(x_1)$ and $\varphi(x)=x_r$. These simple elements are called the \emph{initial} and \emph{final factor} of $x$, respectively.  
An element $x$ of $G$ (with $\ell(x)\geqslant 1$) is termed \emph{(left-)rigid} if its left normal form is a word "cyclically in left normal form", that is, if the pair $(\varphi(x),\iota(x))$ is left-weighted.

We denote by $G^{0}$ the subset of all (necessarily positive) elements with infimum 0. The notion of left and right complement can be extended to all elements of $G^0$: let $x\in G^0$, then the \emph{left} (respectively \emph{right}) \emph{complement} of $x$ is given by $\partial^{-1}x=\Delta^{\sup(x)}x^{-1}$ ($\partial x=x^{-1}\Delta^{\sup(x)}$, respectively). We say that an ordered pair of elements $(x,y)$ of $G^0$ is \emph{in left normal form} if the left normal form of the product $xy$ is the concatenation of the left normal form of $x$ followed by the left normal form of $y$ (in particular, $xy\in G^0$ and $\sup(xy)=\sup(x)+\sup(y)$).

Another useful notion of normal form is the so-called \emph{mixed canonical form}, introduced by Thurston~\cite{Thurston}. First, for every $x\in G$, there exists a unique irreducible fractional decomposition of $x$: a pair $(p,n)$ of elements of $P$ satisfying $n\wedge p=1$ such that $x=n^{-1}p$. 
(Notice that, if both $n$ and $p$ are non-trivial, then both belong to~$G^0$.)
Thurston's normal form is then obtained by further decomposing $n$ and $p$ into their respective left normal forms. 

To conclude this reminder section, we state the following easy but useful lemma, which will be needed in the sequel. 

\begin{lemma}\label{Lemma:Garside}
Let $u,v\in G^0$ and let $r=\sup(u)\geqslant 1$. For all $0\leqslant k\leqslant r$, denote by~$\phi_k$ the product of the rightmost $k$ factors of the right normal form of~$u$ ($\phi_0=1$) and by~$\iota_k$ the product of the leftmost $k$ factors of the right normal form of~$u$ ($\iota_0=1$). 
Then $\inf(uv)=\max\{k\in\{0,\ldots, r\},\ \partial \phi_k\preccurlyeq v\}$. If $\inf(uv)=k$, then $\iota_{r-k}\preccurlyeq uv\Delta^{-k}$.
\end{lemma}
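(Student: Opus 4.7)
The plan is to reduce the general statement to the case $k=1$, proved as an auxiliary key lemma by induction on $r$; iteration then yields the full formula, and the $\iota_{r-k}$ assertion follows immediately. For the inequality $\inf(uv) \geq \max\{k : \partial\phi_k \preccurlyeq v\}$: if $\partial\phi_k \preccurlyeq v$, write $v = \phi_k^{-1}\Delta^k w$ with $w\in P$; then $uv = \iota_{r-k}\phi_k\phi_k^{-1}\Delta^k w = \iota_{r-k}\Delta^k w = \Delta^k\tau^{-k}(\iota_{r-k})w\in\Delta^k P$, so $\inf(uv) \geq k$. Moreover, $\inf(uv) \leq r$: if $\Delta^{r+1} \preccurlyeq uv$, writing $uv = \Delta^{r+1}w$ and using $u \preccurlyeq \Delta^r$ yields $v = \partial u\cdot\Delta w = \Delta\cdot\tau(\partial u)\cdot w$, so $\Delta\preccurlyeq v$, contradicting $v\in G^0$.

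The core technical step is the following \emph{key lemma}: for $u\in G^0$ with right normal form $s'_r\cdots s'_1$ and any $v\in P$, $\Delta\preccurlyeq uv$ if and only if $\partial s'_1\preccurlyeq v$. I would establish this by induction on $r$. The case $r=1$ is trivial; for $r\geq 2$, writing $u=s'_r u'$, the $(\Leftarrow)$ direction is a direct computation. For $(\Rightarrow)$, since $s'_r$ is simple, $\Delta\preccurlyeq s'_r u'v$ is equivalent to $\partial s'_r\preccurlyeq u'v=s'_{r-1}\tilde v$, where $\tilde v = s'_{r-2}\cdots s'_1 v$. Applying the standard Garside identity $a\preccurlyeq bx\Leftrightarrow b^{-1}(a\vee b)\preccurlyeq x$, together with the right-weightedness of $(s'_r,s'_{r-1})$ in the form of the equivalent characterization $\partial s'_r\vee s'_{r-1}=\Delta$, one deduces $\partial s'_{r-1}\preccurlyeq\tilde v$, i.e.\ $\Delta\preccurlyeq u'v$; applying the induction hypothesis to $u'$ then yields $\partial s'_1\preccurlyeq v$. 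This is the main obstacle of the proof, as it is where the right-weightedness of the right normal form enters in an essential way.

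Iterating the key lemma then gives the general formula. Indeed, given $\Delta\preccurlyeq uv$, write $v=\partial s'_1\cdot v_1$ with $v_1\in P$; then $uv=s'_r\cdots s'_2\cdot\Delta v_1=\Delta\cdot\tau(s'_r\cdots s'_2)\cdot v_1$, with $\tau(s'_r\cdots s'_2)$ again in right normal form of supremum $r-1$. Applying the key lemma to this new pair yields $\Delta^2\preccurlyeq uv$ iff $\tau(\partial s'_2)\preccurlyeq v_1$, and so on; after $k\leq r$ iterations one obtains $\Delta^k\preccurlyeq uv$ iff $\partial\phi_k\preccurlyeq v$, matching the direct recursion $\partial\phi_k=\partial\phi_{k-1}\cdot\tau^{k-1}(\partial s'_k)$. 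Taking $k$ maximal gives the desired formula for $\inf(uv)$. Finally, for the $\iota_{r-k}$ assertion, with $k=\inf(uv)$ and $v=\phi_k^{-1}\Delta^k w$ as above, one computes $uv\Delta^{-k}=\iota_{r-k}\Delta^k w\Delta^{-k}=\iota_{r-k}\tau^{-k}(w)$, which is manifestly left-divisible by $\iota_{r-k}$.
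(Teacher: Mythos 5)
Your argument is correct, and for the crucial inequality it follows a genuinely different route from the paper. The easy direction ($\partial\phi_k\preccurlyeq v\Rightarrow\inf(uv)\geqslant k$), the bound $\inf(uv)\leqslant r$, and the final computation giving $\iota_{r-k}\preccurlyeq uv\Delta^{-k}$ are the same in both proofs. The difference lies in the converse inequality $\inf(uv)\leqslant\max\{k:\partial\phi_k\preccurlyeq v\}$: the paper obtains it in one line by invoking Michel's identity $\Delta^{k}\wedgeR(uv)=\Delta^{k}\wedgeR\bigl((\Delta^{k}\wedgeR u)v\bigr)$ together with the fact that $\Delta^{k}\wedgeR u=\phi_k$, whereas you re-prove from scratch the $k=1$ instance of that identity (your ``key lemma'') by induction on $\sup(u)$, using the lattice reformulation of right-weightedness, $\partial s'_{i+1}\vee s'_i=\Delta$, combined with the standard lcm identity $a\preccurlyeq bx\Leftrightarrow b^{-1}(a\vee b)\preccurlyeq x$ (valid here since $x\in P$), and then bootstrap to general $k$ via the recursion $\partial\phi_k=\partial\phi_{k-1}\cdot\tau^{k-1}(\partial s'_k)$. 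Your version is longer but self-contained and makes visible exactly where right-weightedness of the right normal form is used; the paper's version is shorter at the cost of an external citation. One small point worth making explicit in a write-up: your key lemma is stated for arbitrary $v\in P$ (not just $v\in G^0$), and this generality is genuinely needed in the iteration step, since $v_1=(\partial s'_1)^{-1}v$ need not lie in $G^0$ — as stated and proved, your induction does cover this case.
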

\begin{proof} First recall that for any $x\in G$, $k\in\mathbb Z$, we have $\Delta^k\preccurlyeq x$ if and only if $x\succcurlyeq \Delta^k$. 
Note also that $\inf(uv)\geqslant \inf(u)+\inf(v)=0$ and $\inf(uv)\leqslant r$ because otherwise $\Delta^{r+1}=u\partial u \Delta\preccurlyeq uv$, which would imply that $\partial u\Delta\preccurlyeq v$, in contradiction with $\inf(v)=0$.  
Thus $\inf(uv)\in \{0,\ldots, r\}$.  

Let $k_0=\inf(uv)$ and $k'_0= \max\{k\in\{0,\ldots, r\},\ \partial \phi_k\preccurlyeq v\}$. 
We have on the one hand $uv\succcurlyeq \Delta^{k_0}$, that is $\Delta^{k_0}=\Delta^{k_0}\wedgeR uv$. But it is known \cite[Proposition 2.1]{michel} that the latter is exactly $\Delta^{k_0}\wedgeR \left((\Delta^{k_0}\wedgeR u)v\right)$, that is $\Delta^{k_0}=\Delta^{k_0}\wedgeR\phi_{k_0}v$. From this, it follows that $\Delta^{k_0}\preccurlyeq \phi_{k_0}v$, whence $\partial \phi_{k_0}\preccurlyeq v$. This means that $k_0\leqslant k'_0$. 

On the other hand, there is a positive $p$ such that $v=\partial \phi_{k'_0}p$; from this it follows that 
$$uv=(\iota_{r-k'_0}\phi_{k'_0})(\partial\phi_{k'_0}p)=\Delta^{k'_0}\tau^{k'_0}(\iota_{r-k'_0}) p,$$
whence $k'_0\leqslant k_0$. This achieves the first claim of the lemma. 

The remaining statement now follows easily. We saw that $\partial \phi_{k_0}\preccurlyeq v$, i.e.\ there is a positive~$q$ such that $v=\partial \phi_{k_0} q$. Then $uv\Delta^{-k_0}=\iota_{r-k_0}\phi_{k_0}\partial \phi_{k_0} q\Delta^{-k_0}=\iota_{r-k_0}\tau^{-k_0}(q)$, so that $\iota_{r-k_0}\preccurlyeq uv\Delta^{-k_0}$. 
\end{proof}

\subsection{Artin groups of spherical type and their classical Garside structure}\label{S:ArtinGroups}

A \emph{Coxeter matrix} is a square symmetric matrix $(m_{ij})_{1\leqslant i,j\leqslant n}$ 
of size $n\geqslant 2$ with diagonal coefficients equal to 1 and $m_{ij}\in \{2,3,\ldots, \infty\}$ for $i\neq j$; we will make the additional assumption that at least one of $m_{i,j}$  is at least 3.

This data can be encoded by a \emph{Coxeter diagram}; this is a labelled graph with $n$ vertices $v_1,\ldots, v_n$, where two vertices $v_i$ and $v_j$ are connected by an edge if $m_{i,j}\geqslant 3$ and edges are labeled by $m_{i,j}$ whenever $m_{i,j}\geqslant 4$. Our additional assumption says that this graph has at least one edge. 

To any Coxeter matrix $M=(m_{i,j})$ of size $n$ one can associate the group presentation: 
$$\left\langle s_1,\ldots,s_n | \underbrace{s_is_j\ldots}_\textrm{$m_{i,j}$ alternating terms}=  \underbrace{s_js_i\ldots}_\textrm{$m_{i,j}$ alternating terms},\ \ \forall \ 1\leqslant i\neq j \leqslant n\right\rangle.$$ 

A group admitting such a presentation is called an \emph{Artin-Tits group} (or simply \emph{Artin group}). For uniformity of the statements in the sequel, we wanted to exclude free abelian groups from this definition, this is the reason for our additional assumption on the definition of a Coxeter matrix. 

If $M$ is a Coxeter matrix, we denote by $A_M$ the Artin group defined by the presentation associated to $M$. 
If moreover we add to this presentation the relation $s_i^2=1$ for every generator $s_i$, this defines a \emph{Coxeter group}: the Coxeter group associated to $M$ in this way will  be denoted $W_M$. We say that the Artin group $A_M$ is of 
\emph{spherical type} if the Coxeter group $W_M$ is finite. When the Coxeter diagram associated to the Coxeter matrix $M$ is connected, 
the groups $A_M$ and $W_M$ are said to be \emph{irreducible}, otherwise \emph{reducible}.

Finite irreducible Coxeter groups (and hence irreducible Artin groups of spherical type) were completely classified by Coxeter in 1935~\cite{Coxeter}. This well-known result is contained in Figure~\ref{F:CoxeterClassification}.

\begin{figure}[hbt]
\begin{center}\includegraphics[scale=0.88]{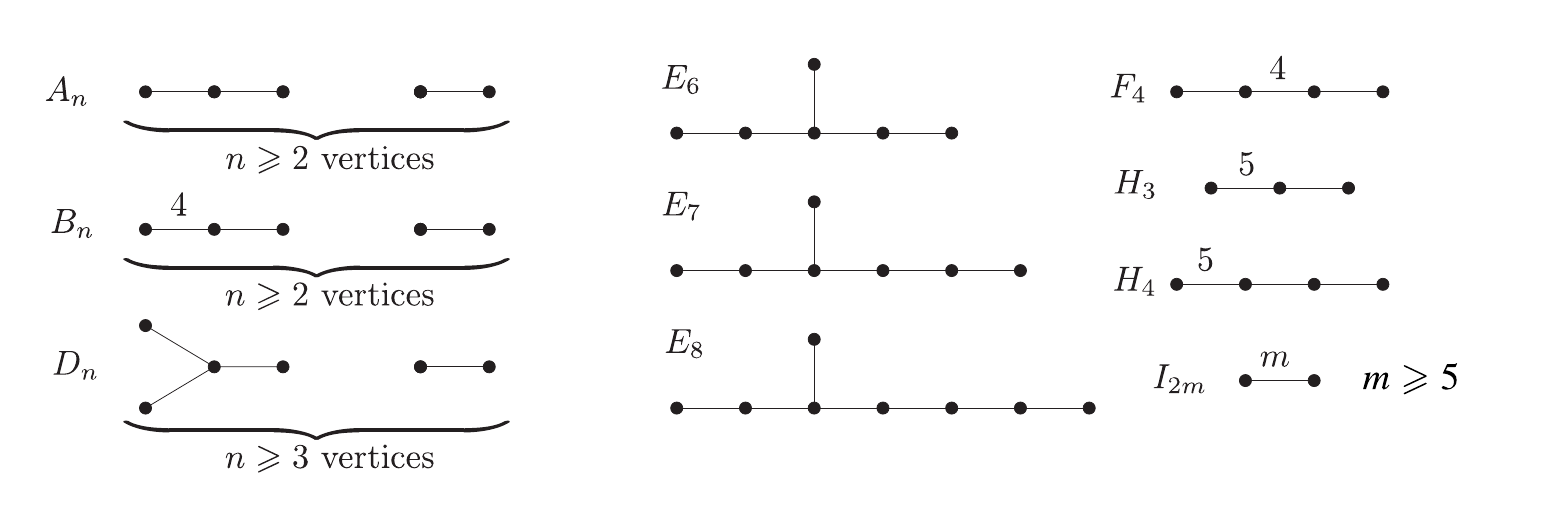}
\end{center}
\caption{If $M$ is a Coxeter matrix such that $A_M$ is irreducible and of spherical type, then the Coxeter diagram associated to $M$ is one of the graphs in this list.} 
\label{F:CoxeterClassification}
\end{figure}

It is known that each Artin group of spherical type admits two distinct Garside structures: the \emph{classical} structure was discovered by Brieskorn-Saito and Deligne \cite{BS,Del} as a generalization of Garside's work~\cite{Garside} on the braid group $B_n$ (that is, the Artin group of type $A_{n-1}$). On another hand, the \emph{dual} Garside structure was introduced more recently by Birman, Ko, Lee and Bessis~\cite{bkl, bessis}. Throughout this paper we are concerned with the former structure. As an aside, we notice that the free abelian group of rank $n\geqslant 1$ is also a Garside group, where the classical Garside structure just consists of the monoid $\N^n$ together with the Garside element $\Delta=(1,\ldots,1)$. 

Now, let $G$ be an \emph{irreducible} Artin group of spherical type; we describe briefly the classical Garside structure of $G$. 
A presentation for $G$ is provided by one of the graphs in Figure~\ref{F:CoxeterClassification}; say $\Gamma$. 
Let $V$ be the set of vertices of $\Gamma$; it corresponds bijectively with the set of generators in the defining presentation of $G$. 
%
The monoid of the positive elements in the classical Garside structure of $G$ is the monoid defined by the same presentation; its elements are those elements of $G$ which can be written as a product of positive powers of the given generators. These generators are atoms and we will use the same notation $V$ for the set of vertices of $\Gamma$ and for the set of atoms in $G$, and $v\in V$ will be, according to the context, a vertex of $\Gamma$ or an atom in $G$. 
The Garside element $\Delta$ is the least common multiple (with respect to both the prefix and the suffix order) of all the atoms \cite[Section 5]{BS}. A positive element of $G$ is a simple element if and only if all possible words in $V$ representing it are square-free \cite[5.4]{BS}. Also, to each subset $V'$ of $V$ is associated a "partial Garside element": $\Delta_{V'}$ is the least common multiple (both on the left and on the right) of the atoms in $V'$ -- note that $\Delta_{V'}$ is a simple element. 
The set $\mathcal S$ of simple elements is in bijective correspondence with the corresponding (finite) Coxeter group \cite[5.6]{BS}. The least central power of $\Delta$ is $o(G)=2$ if $\Gamma$ is one of $A_n$ $(n\geqslant 2)$, $D_{2k+1}$, $E_6$ or $I_{2(2q+1)}$ and $o(G)=1$ otherwise; in any case, the cyclic subgroup $\langle\Omega\rangle=\langle\Delta^{o(G)}\rangle$ is \emph{exactly} the center $Z(G)$ \cite[7.2]{BS}.

The conditions of left and right-weightedness for simple elements can be conveniently rewritten using the notions of \emph{starting} and \emph{finishing sets}. Let $s$ be a simple element. The \emph{starting set} $S(s)$ of $s$ is the set of atoms which are prefixes of $s$; the \emph{finishing set} $F(s)$ of $s$ is the set of atoms which are suffixes of~$s$. With this notation, the pair of simple elements $(s_1,s_2)$ is left-weighted if and only if $S(s_2)\subset F(s_1)$; it is right-weighted if and only if $F(s_1)\subset S(s_2)$. Finally, we recall that $G$ is equipped with the so-called \emph{reverse antiautomorphism}, which sends an element $x$ of $G$ represented by a word $w$ in $V\cup V^{-1}$ to the element $rev(x)$  represented by the word $w$ written backwards.

\subsection{The additional length graph}\label{S:CAL}

We now recall from~\cite{CalvezWiest} the construction of the \emph{additional length graph}. Throughout, $(G,P,\Delta)$ is an arbitrary finite type Garside structure for  the Garside group $G$. The main technical definition is that of an absorbable element:
\begin{definition}
We say that an element $y\in G$ is \emph{absorbable} (in an element $x\in G$) if the two following conditions are satisfied:
\begin{itemize}
\item $\inf(y)=0$ or $\sup(y)=0$,
\item there exists $x\in G$ such that $\inf(xy)=\inf(x)$ and $\sup(xy)=\sup(x)$. 
\end{itemize}
\end{definition} 

As an easy consequence of the definition, notice \cite[Lemma 2.3]{CalvezWiest} that an element $y$ of~$G$ is absorbable if and only if its inverse is absorbable (as well as its image under~$\tau$). Also, we recall \cite[Lemma 2.4]{CalvezWiest} that if $y$ is a positive absorbable element of $G$, then for any decomposition $y=uvw$ with $u,v,w\in P$, $v$ is absorbable. 

\begin{example}\cite[Example 2.6 (4)]{CalvezWiest}\label{example}
Suppose that $a$ is an atom of $G$. Then the right and the left complements of $a$ are not absorbable.  
\end{example}

\begin{definition}
The \emph{additional length graph} associated to the Garside structure $(G,P,\Delta)$ is the graph defined by the following data: 
\begin{itemize}
\item Vertices are in correspondence with the left cosets $g\Delta^{\Z}$. Each vertex $v$ has a unique \emph{distinguished representative} $\underline v\in G^0$. 
\item Two vertices $v,w$ are connected by an edge if and only if one of the following happens: 
\begin{itemize}
\item there is a simple element $s$ distinct from 1 and $\Delta$ such that $\underline v s$ belongs to the coset $w$ (equivalently there is a simple element $s'$ distinct from 1 and $\Delta$ such that $\underline w s'$ belongs to the coset $v$), or
\item there is an absorbable element $y\in G$ such that $\underline v y$ belongs to the coset $w$ (equivalently there exists an absorbable element $y'$ such that $\underline w y'$ belongs to the coset $v$). 
\end{itemize}
\end{itemize}
The additional length graph associated to $(G,P,\Delta)$ is denoted $\mathcal C_{AL}(G,P,\Delta)$; for short we also denote sometimes $\mathcal C_{AL}(G)$  or even $\mathcal C_{AL}$. 
\end{definition} 
As usual, the graph $C_{AL}$ is equipped with a metric structure, declaring that each edge has length 1. We call this metric the \emph{additional length metric} and we denote $d_{AL}(u,v)$ the distance between two vertices $u$ and $v$.  The group $G$ acts isometrically on the left on $\mathcal C_{AL}(G)$ by $g\cdot v=(g\underline v)\Delta^{\mathbb Z}$. Observe that $\Delta$ may not act trivially but $\Delta^{o(G)}$ does; therefore the quotient $G\!_{\Delta}=G/\left\langle\Delta^{o(G)}\right\rangle$ acts isometrically on $\mathcal C_{AL}(G)$. 

In~\cite{CalvezWiest}, we defined a family of \emph{preferred paths} in $\mathcal C_{AL}$ that we recall now. Let $u,v$ be vertices of $\mathcal C_{AL}$; consider the left normal form $s_1\ldots s_r$ of the distinguished representative of the vertex $\underline 
u^{-1}\cdot v=(\underline u^{-1}\underline v)\Delta^{\Z}$. The \emph{preferred path} 
$A(u,v)$ is the path of length $r$ starting at $u$ whose edges are successively labelled $s_1,\ldots, s_r$. 
The following properties were shown in~\cite{CalvezWiest} (Lemmas 2.11 and 2.10, respectively): 
\begin{itemize} 
\item[(PP1)] 
Preferred paths are symmetric: for any vertices $u,v$ of $\mathcal C_{AL}$, $A(v,u)$ meets (in the opposite order) the same vertices as $A(u,v)$.
\item[(PP2)]  The preferred path $A(u,v)$ between two vertices $u$ and $v$ is the concatenation of the paths 
$A(u,(\underline u\wedge \underline v)\Delta^{\Z})$ and $A((\underline u\wedge \underline v)\Delta^{\Z},v)$. 
\end{itemize}

Moreover, we state the following straightforward fact, which will be useful in the sequel. 
\begin{lemma}\label{L:Equivariant}
Let $u,v$ be two vertices of $\mathcal C_{AL}$. Let $g\in G$. If $u=u_0,u_1, \ldots, u_m=v$ is the sequence of vertices along the preferred path $A(u,v)$, then the preferred path $A(gu,gv)$ 
meets the vertices $gu=gu_0, gu_1,\ldots, gu_m=gv$. Moreover, if $s_1,\ldots, s_m$ is the sequence of edge labels along the path $A(u,v)$, then the sequence of edge labels along $A(gu,gv)$ is either 
$s_1,\ldots, s_m$ or $\tau(s_1),\ldots, \tau(s_m)$.
\end{lemma}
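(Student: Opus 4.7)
The plan is to reduce everything to the single coset $\underline u^{-1}\cdot v = \underline u^{-1}\underline v\Delta^{\Z}$ and track how left-multiplication by $g$ affects it. Set $j_u=\inf(g\underline u)$ and $j_v=\inf(g\underline v)$, so that by definition $\underline{gu}=g\underline u\,\Delta^{-j_u}$ and $\underline{gv}=g\underline v\,\Delta^{-j_v}$. Using the standard identity $\Delta^k x = \tau^{-k}(x)\Delta^k$ (immediate from $\tau(x)=\Delta^{-1}x\Delta$), I would first compute
$$\underline{gu}^{-1}\underline{gv}=\Delta^{j_u}\underline u^{-1}\underline v\,\Delta^{-j_v}=\tau^{-j_u}(\underline u^{-1}\underline v)\,\Delta^{j_u-j_v},$$
so that the cosets $\underline{gu}^{-1}\cdot gv$ and $\tau^{-j_u}(\underline u^{-1}\cdot v)$ coincide modulo $\Delta^{\Z}$.

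Next I would invoke that $\tau$ is a Garside automorphism: it permutes simples, satisfies $\partial\tau=\tau\partial$, and preserves infima, so it sends every left-weighted pair to a left-weighted pair. Consequently, if $s_1\ldots s_m$ is the left normal form of the distinguished representative of $\underline u^{-1}\cdot v$ (which has infimum $0$), then $\tau^{-j_u}(s_1)\ldots\tau^{-j_u}(s_m)$ is the left normal form of the distinguished representative of $\underline{gu}^{-1}\cdot gv$. By the definition of preferred paths, the edges of $A(gu,gv)$ are then labelled $\tau^{-j_u}(s_1),\ldots,\tau^{-j_u}(s_m)$. Since $G$ is irreducible of spherical type we have $o(G)\in\{1,2\}$, hence $\tau^2=\mathrm{id}$ and $\tau^{-j_u}\in\{\mathrm{id},\tau\}$ depending on the parity of $j_u$; this yields exactly the two possibilities stated in the lemma.

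For the vertices, the $i$-th vertex along $A(u,v)$ is by construction the coset $\underline u\,s_1\cdots s_i\Delta^{\Z}$, whose $g$-image is $gu_i=g\underline u\,s_1\cdots s_i\Delta^{\Z}$. On the other hand, the $i$-th vertex of $A(gu,gv)$ equals $g\underline u\,\Delta^{-j_u}\tau^{-j_u}(s_1\cdots s_i)\Delta^{\Z}$; applying the identity $\Delta^{-j_u}\tau^{-j_u}(y)=y\Delta^{-j_u}$ shows that this is the same coset $gu_i$.

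I do not expect a genuine obstacle; the content is bookkeeping with Garside normal forms. The only subtle point is that the definition of the preferred path uses the distinguished representative of $gu$ rather than $g\underline u$ itself, and it is precisely this $\Delta^{-j_u}$ correction that generates the possible $\tau$-shift on the edge labels.
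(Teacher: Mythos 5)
Your proof is correct, and it is precisely the routine bookkeeping that the paper omits (the lemma is stated without proof as a ``straightforward fact''): the computation $\underline{gu}^{-1}\,\underline{gv}=\tau^{-j_u}(\underline u^{-1}\underline v)\,\Delta^{j_u-j_v}$, the fact that $\tau$ fixes $\Delta$ and preserves left normal forms and infima (hence sends distinguished representatives to distinguished representatives), and the identity $\Delta^{-j_u}\tau^{-j_u}(y)=y\,\Delta^{-j_u}$ together yield exactly the stated conclusion for both the vertices and the edge labels. You are also right to flag the one genuine subtlety: the ``either $s_i$ or $\tau(s_i)$'' dichotomy uses $\tau^2=\mathrm{id}$, i.e.\ $o(G)\leqslant 2$, which holds for irreducible Artin--Tits groups of spherical type but not for a general Garside group (the ambient setting of the section where the lemma is stated), where the correct conclusion is that the labels are $\tau^{k}(s_1),\ldots,\tau^{k}(s_m)$ for a single exponent $k$.
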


Finally, the following result was the main theorem of~\cite{CalvezWiest}: 

\begin{theorem}\label{T:QuasiGeodesics}
For any Garside group $(G,P,\Delta)$, the additional length graph $\mathcal C_{AL}(G,P,\Delta)$ is 60-hyperbolic. 
The preferred paths are uniform unparameterized quasi-geodesics in~$\mathcal C_{AL}$: for all vertices $u,v$ of $\mathcal C_{AL}$, the Hausdorff distance between $A(u,v)$ and any geodesic relating $u$ and $v$ is bounded above by 39. 
\end{theorem}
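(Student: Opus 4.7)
The plan is to prove both conclusions of Theorem~\ref{T:QuasiGeodesics} simultaneously, following the Masur--Minsky/Bowditch strategy for establishing Gromov-hyperbolicity of a graph together with a preferred family of uniform quasi-geodesics. The natural candidate family is the set of preferred paths $\{A(u,v)\}$: by (PP1) they are symmetric, and by (PP2) they exhibit a hierarchical factorisation through gcds. The first step is to set up a coarse \emph{projection} from $\mathcal C_{AL}$ to a preferred path $A(u,v)$, sending a vertex $w$ to a vertex of $A(u,v)$ close to $(\underline u\wedge \underline w)\Delta^{\Z}$; two applications of (PP2) would ensure this lands on $A(u,v)$ up to a bounded error.

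The heart of the argument is a \emph{contraction lemma}: if $w$ and $w'$ are joined by an edge of $\mathcal C_{AL}$, then their projections to $A(u,v)$ lie at a uniformly bounded $d_{AL}$-distance from one another. An edge of $\mathcal C_{AL}$ is either of \emph{simple} type (multiplication by a non-trivial proper divisor of $\Delta$) or of \emph{absorbable} type. In the simple case the claim follows from a direct computation in the prefix lattice, since intersecting with a proper divisor of $\Delta$ can change the gcd by at most one simple factor; this uses Lemma~\ref{Lemma:Garside} to control $\inf(\cdot)$ under the product. In the absorbable case one exploits the defining property of absorbable elements together with \cite[Lemma 2.4]{CalvezWiest} to show that an absorbable edge barely perturbs the gcd with a third vertex, the point being that absorption is a condition transverse to sharing a long prefix with a fixed element.

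With the contraction lemma in hand, I would invoke a standard criterion (Bowditch/Hamenst\"adt style) stating that a graph endowed with a symmetric family of paths satisfying the factorisation property (PP2) and whose associated projections are coarsely Lipschitz is $\delta$-hyperbolic, with the preferred paths being uniform unparameterised quasi-geodesics within bounded Hausdorff distance of actual geodesics. Because every quantity in the argument depends only on the contraction constant and not on the particular Garside structure, the two numerical bounds $60$ and $39$ then arise from a careful bookkeeping of the universal constants appearing in the projection and stability estimates; this bookkeeping is the routine but tedious final stage.

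The principal obstacle, as foreshadowed above, is the absorbable case of the contraction lemma. Absorbable elements can have arbitrarily large canonical length, so the naive bound ``an edge changes the gcd by $O(\ell(y))$'' is useless. The subtle point is to show that, although an absorbable $y$ may be long, its action on the prefix structure of a third vertex is effectively trivial once combined with neighbouring simple factors: any prefix of $\underline w y$ that survives after taking a gcd with $\underline u$ must already have been a prefix of $\underline w$ up to a controlled error. Making this intuition precise, and stable under iteration along a path, is where the delicate analysis of Section~\ref{S:Garside} — in particular the interplay between left and right normal forms via $\partial$, $\partial^{-1}$, and the reverse antiautomorphism — will be essential.
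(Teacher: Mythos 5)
First, note that the paper does not prove Theorem~\ref{T:QuasiGeodesics} at all: it is quoted verbatim as the main theorem of~\cite{CalvezWiest}, so the only ``proof'' here is a citation. Measured against the actual argument in that reference, your proposal is not a reconstruction of it: \cite{CalvezWiest} establishes hyperbolicity via the Masur--Minsky/Bowditch ``guessing geodesics'' criterion, i.e.\ by showing (i) that $A(u,v)$ has uniformly bounded diameter when $d_{AL}(u,v)\leqslant 1$ and (ii) the thin-triangle property $A(u,w)\subset N_K\bigl(A(u,v)\cup A(v,w)\bigr)$ for a universal $K$, from which both the constant $60$ and the stability constant $39$ are extracted. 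No projection onto preferred paths is constructed there.

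The gap in your route is the ``standard criterion'' you invoke in the third paragraph. A symmetric, (PP2)-compatible family of paths together with \emph{coarsely Lipschitz} projections onto each path does not imply hyperbolicity: the Euclidean plane with its straight lines and orthogonal projections satisfies every hypothesis you list (the projections are even $1$-Lipschitz retractions), yet $\mathbb R^2$ is not hyperbolic, and a Lipschitz retraction onto a path does not force that path to be a quasi-geodesic either (the image of a geodesic under the retraction is merely a coarse chain along the path, which says nothing about the path's excursions away from the geodesic). What the projection-based criteria actually require is the \emph{strong contraction} property --- the projection of any ball disjoint from $A(u,v)$ has uniformly bounded diameter --- and your ``contraction lemma'' (adjacent vertices have close projections) is strictly weaker than that. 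So either you must upgrade the contraction lemma to genuine strong contraction, or you should replace the whole scheme by the thin-triangle criterion, which is what the cited proof does. A secondary problem: your projection sends $w$ to a vertex near $(\underline u\wedge\underline w)\Delta^{\Z}$, but (PP2) only guarantees that this vertex lies on $A(u,w)$, not on or near $A(u,v)$; as stated the map does not even obviously take values in the target path, so the setup needs to be repaired before the contraction question can be posed.
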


Our next goal is to study the diameter of the additional length graph associated to the classical Garside structure of an Artin group of spherical type. Before proceeding, we state a small technical remark about a kind of inverse for the vertices of the additional length graph. 

\begin{lemma}\label{L:Inverse}
Let $g\in G$. Consider the vertices $v=g\Delta^{\mathbb Z}$ and $v'=g^{-1}\Delta^{\Z}$ of $\mathcal C_{AL}(G)$. 
We have $\underline v'=\tau^{\inf(g)}(\partial \underline v)$. 
\end{lemma}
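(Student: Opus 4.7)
The plan is a short, direct computation. Set $p = \inf(g)$ and $q = \sup(g)$. The first observation I would make is that, by definition of the distinguished representative, $\underline{v}$ is the unique element of $G^0$ in the coset $g\Delta^{\mathbb{Z}}$, hence $\underline{v} = g\Delta^{-p}$. Consequently $\sup(\underline{v}) = q - p = \ell(g)$, and applying the definition of $\partial$ for elements of $G^0$ recalled in Section~\ref{S:Garside} gives
$$\partial\underline{v} \;=\; \underline{v}^{-1}\Delta^{\sup(\underline{v})} \;=\; \Delta^{p}\, g^{-1}\,\Delta^{q-p}.$$

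Next I would apply $\tau^{p}$, i.e.\ conjugation by $\Delta^{p}$, which yields
$$\tau^{p}(\partial\underline{v}) \;=\; \Delta^{-p}\cdot \Delta^{p}\,g^{-1}\,\Delta^{q-p}\cdot \Delta^{p} \;=\; g^{-1}\Delta^{q}.$$
It then remains to check that $g^{-1}\Delta^{q}$ is precisely the distinguished representative $\underline{v}'$ of the coset $g^{-1}\Delta^{\mathbb{Z}}$. It obviously lies in this coset, so I only have to verify that $g^{-1}\Delta^{q}\in G^0$, i.e.\ that its infimum is $0$. Using $\inf(g^{-1}) = -\sup(g) = -q$ and the basic identity $\inf(x\Delta^{k}) = \inf(x)+k$ (a direct consequence of the definition of $\inf$ and of the fact that $\Delta^{k}$ commutes with $\preccurlyeq$ up to $\tau$), we get $\inf(g^{-1}\Delta^{q}) = -q + q = 0$, as required.

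There is no real obstacle here; the whole proof is a two-line manipulation together with the identification of $\underline{v}'$. The only point to be slightly careful about is the formula $\inf(x\Delta^{k}) = \inf(x) + k$, which I would briefly justify by noting that $\Delta^{j}\preccurlyeq x\Delta^{k}$ if and only if $\Delta^{j-k}\preccurlyeq x$, since conjugation by $\Delta$ preserves the prefix order. With this in hand, the chain of equalities above finishes the proof.
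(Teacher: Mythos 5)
Your proposal is correct and is essentially the same computation as the paper's: both unwind $\underline{v}=g\Delta^{-\inf(g)}$ and the definition of $\partial$ to identify $\tau^{\inf(g)}(\partial\underline{v})$ with the $G^0$-representative of $g^{-1}\Delta^{\mathbb Z}$ (the paper rearranges the identity as an expression for $g^{-1}$, you rearrange it as an expression for $\tau^{p}(\partial\underline{v})$ and check its infimum is $0$, which is the same argument read in the other direction).
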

\begin{proof}
The distinguished representative of $v$ is $\underline v=g\Delta^{-\inf(g)}$. We now observe from the latter equality that 
$$g^{-1}=\Delta^{-\inf(g)}\underline v^{-1}=\Delta^{-\inf(g)} \partial \underline v\Delta^{-\sup(\underline v)}=
\tau^{\inf(g)}(\partial \underline v)\Delta^{-\sup(\underline v)-\inf(g)},$$ which shows that the element $\tau^{\inf(g)}(\partial \underline v)$ of $G^0$ is the distinguished representative of~$v'$.~\end{proof}

To conclude this section, we proceed to show the "only if" part of the statement of Theorem~\ref{T:InfDiam}:

\begin{proposition}\label{P:OnlyIf}
Suppose that $G$ is a reducible (nonabelian) Artin group of spherical type, endowed with its classical Garside structure. Then the diameter of $\mathcal C_{AL}(G)$ is finite.  
\end{proposition}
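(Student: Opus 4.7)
The plan is to produce a uniform upper bound on $d_{AL}(\ast,v)$ for all vertices $v$ of $\mathcal C_{AL}(G)$, where $\ast=\Delta^{\Z}$; since $G$ acts transitively on the vertex set, this will bound the diameter. Reducibility gives a direct-product decomposition $G=G_1\times G_2$ where each $G_i$ is a non-trivial Artin-Tits group of spherical type, and the classical Garside structure on $G$ is the product of those on the $G_i$: the Garside element is $\Delta=\Delta_1\Delta_2$, positive elements split as pairs, and $\inf_G$ (resp.~$\sup_G$) of $(y_1,y_2)$ equals the minimum (resp.~maximum) of $\inf_{G_i}(y_i)$ (resp.~$\sup_{G_i}(y_i)$). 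Since $G$ is non-abelian, I may assume $G_1\not\cong\Z$; in particular $G_1$ admits elements of arbitrary canonical length.

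The key ingredient is an absorbability principle exploiting that $G_1$ and $G_2$ centralise each other. First, for any positive $y\in G_i$ with $\inf_{G_i}(y)=0$ and $k=\sup_{G_i}(y)$, the witness $x:=\Delta_{3-i}^{k}\in G_{3-i}$ commutes with $y$, and both $x$ and $xy$ have $\inf_G=0$ and $\sup_G=k$; hence $y$ is absorbable and $d_{AL}(\ast,y\Delta^{\Z})\leq 1$. Second, for every $m\geq 1$ and $i\in\{1,2\}$, the vertex $\Delta_i^m\Delta^{\Z}$ is at distance at most one from $\ast$: the coset contains the alternative representative $\Delta_{3-i}^{-m}=\Delta_i^m\Delta^{-m}$, and at least one of the two representatives is absorbable, the witness being a canonical-length-$m$ element of $G_1$ (for instance $a^m$ or $\Delta_1^{-m}a^m$ for an atom $a\neq\Delta_1$ of $G_1$, whose existence relies on $G_1\not\cong\Z$).

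The rest is triangle inequality together with equivariance. For a positive $g_i\in G_i$, write $g_i=\Delta_i^m g_i'$ with $g_i'$ positive and $\inf_{G_i}(g_i')=0$; the $\Delta_i^m$-equivariance of the $G$-action gives $d_{AL}(\Delta_i^m\Delta^{\Z},g_i\Delta^{\Z})=d_{AL}(\ast,g_i'\Delta^{\Z})\leq 1$, so
\[
d_{AL}(\ast,g_i\Delta^{\Z})\leq d_{AL}(\ast,\Delta_i^m\Delta^{\Z})+d_{AL}(\Delta_i^m\Delta^{\Z},g_i\Delta^{\Z})\leq 2.
\]
For arbitrary $g\in G$ I replace $g$ by its distinguished representative in $G^{0}$ and split it as $g=g_1g_2$ with each $g_i$ positive; by $g_1$-equivariance, $d_{AL}(\ast,g\Delta^{\Z})\leq d_{AL}(\ast,g_1\Delta^{\Z})+d_{AL}(\ast,g_2\Delta^{\Z})\leq 4$, giving a diameter bound of $8$.

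The main subtlety is that $\Delta_i^m$ is generally \emph{not} absorbable in $G$ (in particular this fails when $G_{3-i}\cong\Z$ and $m\geq 2$), so one cannot always reach $\Delta_i^m\Delta^{\Z}$ from $\ast$ by a single absorbable edge via the obvious representative. The fix is to use the alternative representative $\Delta_{3-i}^{-m}$ and absorb it with a long element from the non-cyclic factor $G_1$. This is precisely the step at which the non-abelian hypothesis is used essentially, as it guarantees that such a non-cyclic factor exists.
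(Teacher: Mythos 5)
Your proof is correct and follows essentially the same strategy as the paper's: absorb positive elements supported in one direct factor using a commuting witness of matching canonical length from the complementary factor, treat the powers of the partial Garside elements $\Delta_i$ separately by exploiting the non-cyclic factor, and conclude by equivariance and the triangle inequality. The differences are only organizational --- you avoid the paper's case split on the number of irreducible components, and you handle $\Delta_i^m$ via the alternative coset representative $\Delta_{3-i}^{-m}$ where the paper instead writes $(\Delta_1^k,1)=(a_1^k,1)(a_1^{-k}\Delta_1^k,1)$ as a product of two absorbable elements --- so you obtain the bound $8$ in place of the paper's $5$, which is equally sufficient.
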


As a warm-up before giving the proof, we deal with the case of the additional length graph associated to the classical Garside structure of a free abelian group. 

\begin{proposition}
Let $n\geqslant 1$. The additional length graph $\mathcal C_{AL}(\mathbb Z^n)$ has finite diameter, except if $n=2$.
\end{proposition}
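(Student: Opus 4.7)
The plan is to unwind the classical Garside structure of $\Z^n$, where $P = \N^n$ and $\Delta = (1, \ldots, 1)$. In this setting $\inf(g)$ is the minimum coordinate of $g$ and $\sup(g)$ is its maximum, so the distinguished representative of a vertex $v = g\Delta^{\Z}$ is the translate of $g$ whose minimum coordinate equals~$0$. The case $n = 1$ is immediate: $\Z = \Delta^{\Z}$, so $\mathcal C_{AL}(\Z)$ has only one vertex and diameter~$0$. The rest of the argument splits according to whether $n = 2$ or $n \geq 3$, and in both cases the starting point is a characterisation of the absorbable elements.

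For $n = 2$ I would first show that there is no non-trivial absorbable element. If $\inf y = 0$, so that $y \in \N^2$ with some zero coordinate, then absorbability requires an $x$ with $\min(x+y) = \min x$ and $\max(x+y) = \max x$; since $y \geq 0$ the max can remain unchanged only if $y$ vanishes at an index realising $\max x$, and symmetrically for the min. With only two coordinates this forces $y = 0$. The case $\sup y = 0$ is symmetric, and elements with mixed signs are excluded a priori by the first absorbability condition. Hence only the simple elements $e_1 = (1,0)$ and $e_2 = (0,1)$ contribute edges, and tracing how they act on distinguished representatives shows that the graph is a bi-infinite path on the vertices indexed by~$\Z$ (with vertex~$k$ having distinguished representative $(k,0)$ for $k \geq 0$ and $(0,-k)$ for $k<0$), which has infinite diameter.

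For $n \geq 3$, the key step is to prove that for each index~$i$ and each $k \geq 0$ the element $-ke_i$ is absorbable: choosing $j \neq i$ and, crucially, a third index $\ell \neq i,j$ (this is where $n \geq 3$ enters), the element $x = k(e_i + e_j)$ has $\min x = 0$ (attained at~$\ell$) and $\max x = k$, while $x + (-ke_i) = ke_j$ again has minimum $0$ and maximum $k$. Taking inverses, $ke_i$ is absorbable as well. Now given any vertex~$v$ with distinguished representative $\underline v = (a_1, \ldots, a_n)$ and some $a_{j_0} = 0$, I would reduce each nonzero coordinate $a_i$ to $0$ in one absorbable edge labelled $-a_i e_i$; after each step the new element still has minimum coordinate~$0$ (coordinate $j_0$ is never touched), so it remains the distinguished representative of its coset. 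At most $n-1$ such steps bring $v$ to the identity coset, giving $d_{AL}(v, 1) \leq n-1$ and hence $\operatorname{diam} \mathcal C_{AL}(\Z^n) \leq 2(n-1)$.

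The main obstacle is the characterisation of absorbable elements: the sharp threshold ``at least two zero coordinates among the nonnegative entries of $y$'' is precisely what distinguishes $n = 2$ from $n \geq 3$. Once that dichotomy is identified, the infinite-diameter direction reduces to tracing a one-dimensional graph and the finite-diameter direction reduces to the coordinate-by-coordinate reduction sketched above.
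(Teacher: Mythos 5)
Your proposal is correct and follows essentially the same route as the paper: a single vertex for $n=1$, the observation that $\mathcal C_{AL}(\Z^2)$ is a bi-infinite path because no non-trivial element of $\Z^2$ is absorbable, and for $n\geqslant 3$ the absorbability of all powers of atoms (which the paper outsources to \cite[Example 2.6(i)]{CalvezWiest}) followed by a coordinate-by-coordinate reduction. You merely fill in details the paper leaves implicit, and your bound $2(n-1)$ versus the paper's $n$ is immaterial for finiteness.
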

\begin{proof}
For $n=1$, the additional length graph has only one vertex, by definition. For $n=2$, the left cosets of $\Delta$ form a cyclic group (the quotient of $\mathbb Z^2$ by its diagonal). Moreover, no non-trivial element is absorbable so that the additional length graph is just an infinite path graph. Finally, the case $n\geqslant 3$ is dealt with in Example 2.6~(i) in~\cite{CalvezWiest}: any power of an atom is absorbable, so that the diameter of $\mathcal C_{AL}$ is at most~$n$. 
\end{proof}

\begin{proof}[Proof of Proposition~\ref{P:OnlyIf}]
Suppose that $G=G_1\times \cdots\times G_m$ is reducible with $m\geqslant 3$ components. For $i=1,\ldots,m$ we denote by $\Delta_i$ the Garside element of $G_i$, so that the Garside element of $G$ is $(\Delta_1,\ldots,\Delta_m)$. For $x\in G$ and $i\in \{1,\ldots,m\}$, we write $x_i$ for the $i$th coordinate of $x$. 
Let $i\in \{1,\ldots,m\}$; let $y \in G$ such that $y_i$ is a \emph{positive} element of $G_i$ and $y_j$ is trivial for $j\neq i$. Choose $j\neq i$. Let $z\in G$ such that $z_j=\Delta_j^{\sup(y_i)}$ and $z_k$ is trivial for $k\neq j$. Then we see that $y$ is absorbable in $z$. 
Using the mixed canonical form in each factor group, we see that each element of $G$ with all its coordinates trivial but one can be decomposed as a product of at most 2 absorbable elements. Finally, at most $m$ elements with only one non-trivial coordinate are needed to write down any element of $G$. Hence $\mathcal C_{AL}(G)$ has finite diameter, at most~$2m$.

Now suppose that $G=G_1\times G_2$ is reducible with two components, at least one of which is not cyclic, say $G_1$ (so that $G_1$ has at least 2 atoms). Let $\Delta_1$ and $\Delta_2$ be the respective Garside elements; let $a_1$ be an atom in $G_1$ and $a_2$ be an atom in $G_2$. Every element of the form $(1,g_2)$ with $g_2$ \emph{positive} in $G_2$ is absorbable by $(a_1^{\sup(g_2)},1)$ -- this is true because $\inf(a_1^{\sup(g_2)})=0$. Using fractional decomposition in $G_2$, any element whose first coordinate is trivial can be decomposed as a product of at most two absorbable elements. 
We now claim that every element whose second coordinate is trivial can be decomposed as a product of at most \emph{3} absorbable elements. First, if $g_1\in G^0_1$, $(g_1,1)$ is absorbable by $(1,a_2^{\sup(g_1)})$. Second, as $G_1$ has at least 2 atoms, for any integer $k$, the first case shows that $(\Delta^{k}_1,1)=(a^k_1,1)(a^{-k}_1\Delta^k_1,1)$ is a product of two absorbable elements. Third, our claim about a general  $(g_1,1)$ is shown using the mixed canonical form and the two latter observations. Finally, we have shown that $\text{diam}(\mathcal C_{AL}(G))\leqslant 5$. 
\end{proof}

\section{Loxodromic elements}\label{S:Lox}

In this section, we shall complete the proof of Theorem~\ref{T:InfDiam}. Throughout, $G$ will be an \emph{irreducible} Artin group of spherical type, equipped with its classical Garside structure. We need to show that the additional length graph of $G$ has infinite diameter. A key-point is to construct suitable elements of $G$, whose action on the graph is loxodromic. 
This is accomplished by showing: 

\begin{proposition}\label{P:Lox}
There exist atoms $a,b\in G$ and an element $x_G\in G^0$ (i.e. $\inf(x_G)=0$)
satisfying the following properties: 
\begin{itemize}
\item[(i)] the left normal form and the right normal form of $x_G$ are the same (we shall just call it ``the normal form'' of~$x_G$),
\item[(ii)] $\iota(x_G)=\phi(x_G)=a$, and the first and last factors of the normal form of $x_G$  are the only ones consisting of a single atom, 
\item[(iii)] the normal form of $x_G$ has two consecutive factors which are respectively $b^{-1}\Delta$ and $\tau(b^{-1})\Delta$,
\item[(iv)] $\sup(x_G)$ is a multiple of $o(G)$; in other words, $\tau^{\sup(x_G)}=id_G$, and $\Delta^{\sup(x_G)}$ is central.
\end{itemize}
\end{proposition}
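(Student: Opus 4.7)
The plan is to construct $x_G$ explicitly as a symmetric (``palindromic'') product of simple factors. I first pick two atoms $a,b \in V$ connected by an edge of the Coxeter diagram (possible since the diagram has at least one edge by hypothesis), and aim at an $x_G$ of the shape
$$x_G \;=\; a \cdot u_1 \cdots u_k \cdot \partial b \cdot \partial\tau(b) \cdot v_k \cdots v_1 \cdot a,$$
where $u_1,\ldots,u_k,v_1,\ldots,v_k$ are non-trivial simple elements different from $\Delta$, each of them a product of at least two atoms. Setting $v_i := \mathrm{rev}(u_{k+1-i})$ makes $x_G$ literally palindromic, so that the weightedness conditions on the right half become automatic once they are checked on the left half. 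Property (iii) is then built in by construction, and so is the identity $\iota(x_G) = \varphi(x_G) = a$.

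Property (i) (equality of the left and right normal forms) is equivalent to the stronger condition $F(s_j) = S(s_{j+1})$ on every consecutive pair of simple factors. Using the descent formulas $S(\partial b) = V \setminus \{b\}$ and $F(\partial b) = V \setminus \{\tau(b)\}$ (easily derived from the Coxeter description of simples), together with $\tau^2 = \mathrm{id}$ (since $o(G) \in \{1,2\}$ for irreducible spherical-type Artin-Tits groups), the central pair $(\partial b, \partial\tau(b))$ is automatically in normal form in both senses. The combinatorial core of the proof is then the construction of the bridging chain $u_1,\ldots,u_k$ with $S(u_1)=\{a\}$, $F(u_k)=V\setminus\{b\}$, and $F(u_i)=S(u_{i+1})$ in between. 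In the rank-$2$ case the bridge is trivial and the formula specialises to $x_G = a^2 b^2 a^2 b^2 a^2$, whose normal form is $a \cdot ab \cdot ba \cdot ab \cdot ba \cdot a$ (as one can readily verify in $B_3$). In higher rank one walks along the Coxeter diagram from $a$ toward $b$ and enlarges the starting/finishing sets one atom at a time by using products of two or three atoms whose Coxeter descents are readily computed.

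Properties (ii) and (iv) are then routine. For (ii), the only atomic factors are the two endpoints by construction. For (iv), we have $\sup(x_G) = 2k+4$, and if this is not a multiple of $o(G)\in\{1,2\}$ we insert one more copy of $\partial b \cdot \partial\tau(b)$ in the centre; the string $\ldots\partial b\cdot\partial\tau(b)\cdot\partial b\cdot\partial\tau(b)\ldots$ stays in normal form since $\tau^2=\mathrm{id}$ forces $F(\partial\tau(b)) = V\setminus\{b\} = S(\partial b)$. I expect the \emph{main obstacle} to be the explicit production of the bridging chain $u_1,\ldots,u_k$ uniformly across the Coxeter classification of Figure~\ref{F:CoxeterClassification}: the factor $\partial b$ is a ``large'' simple element whose starting set $V\setminus\{b\}$ is far from the singleton $\{a\}$, so several intermediate simples must be interpolated, and one must check that in every type such non-atomic interpolants really exist. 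This is essentially a finite combinatorial check, but it may need to be organised case by case.
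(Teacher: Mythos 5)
Your architecture is exactly the one the paper uses: take $x_G=x\cdot\mathrm{rev}(x)$, where $x$ is a chain of simple factors in simultaneous left and right normal form running from the atom $a$ to the complement $\partial b=b^{-1}\Delta$, and observe that the central pair $(\partial b,\partial\tau(b))$ is automatically weighted on both sides (your descent formulas $S(\partial b)=V\setminus\{b\}$ and $F(\partial b)=V\setminus\{\tau(b)\}$ are correct, and $\sup(x_G)$ even makes (iv) automatic). The genuine gap is the step you yourself flag as ``the main obstacle'': producing, in \emph{every} irreducible spherical type, the bridging chain $u_1,\dots,u_k$ with $S(u_1)=\{a\}$, $F(u_k)=V\setminus\{b\}$, $F(u_i)=S(u_{i+1})$, and no $u_i$ equal to an atom or to $\Delta$. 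That existence statement is the entire mathematical content of the proposition; there is no uniform reason why, at each step of your ``walk along the diagram'', a non-atomic simple element with the prescribed starting and finishing sets should exist, and the answer really does depend on the Coxeter type. The paper does not do this check by hand either: it imports it from Gebhardt--Tawn's proof that the normal-form language is essentially $5$-transitive \cite[Section 5]{GebhardtTawn}, whose case-by-case Propositions 51--60 yield, for $s=a$ an atom and $t=\partial b$, an $x\in G^0$ of length at most $6$ with coinciding left and right normal forms and strictly increasing atom counts in the factors. Without either carrying out that finite verification or citing it, the proof is incomplete.

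Two smaller corrections. First, your mirroring index is off: with the second half written as $v_k\cdots v_1\cdot a$ and $v_i=\mathrm{rev}(u_{k+1-i})$, the factor adjacent to $\partial\tau(b)$ is $\mathrm{rev}(u_1)$, whose starting set is $F(u_1)$ rather than the required $V\setminus\{b\}$, and the subsequent junctions ask for $S(u_i)=F(u_{i+1})$, which is not what you assumed. You want $v_i=\mathrm{rev}(u_i)$, so that the second half reads $\mathrm{rev}(u_k)\cdots\mathrm{rev}(u_1)\cdot a=\mathrm{rev}(a\,u_1\cdots u_k)$ and each junction condition is the image under $\mathrm{rev}$ (which swaps starting and finishing sets and exchanges left- and right-weightedness) of one already verified on the left half. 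Second, the formula $a^2b^2a^2b^2a^2$ only treats $I_2(3)=A_2$: in $I_2(m)$ the complement $\partial b$ is the alternating word of length $m-1$, so the degenerate rank-$2$ case should rather be something like $a\cdot\partial b\cdot\partial\tau(b)\cdot a$.
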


\begin{remark}
In our case, the case of irreducible Artin-Tits groups of spherical type, condition~(iv) is satisfied if the length of~$x_G$ is even. We will actually be able to choose the length of~$x_G$ to be even and at most 12 in all our cases.
\end{remark}

\begin{proof}[Proof of Proposition~\ref{P:Lox}]
The argument follows the proof by Gebhardt and Tawn \cite[Section 5]{GebhardtTawn} of the fact that the language of left normal forms in the positive monoid of $G$ is \emph{essentially 5-transitive}. This means in particular that for each pair of simple elements $s,t\notin \{1,\Delta\}$ in $G$, there is $x\in G^0$ (with canonical length at most $6$) such that $\iota(x)=s$ and $\phi(x)=t$. 
The proof of this fact in~\cite{GebhardtTawn} is given in Propositions 51 to 60, which successively deal with each specific case of Coxeter's classification. An exhaustive checking shows that if moreover $s=a$ is an atom and $t=b^{-1}\Delta$ is the complement of an atom, then $x$ can be chosen so that its left and right normal forms are the same and so that the number of atoms in each successive factor is strictly increasing, this implies in particular that no other factor than the first one is an atom.

As $(b^{-1}\Delta,\tau(b^{-1})\Delta)$ is both a left and right-weighted pair, the theorem will be proved if we can show that there is an element $x'\in G^0$ with the same left and right normal forms so that $\iota(x')=\tau(b^{-1})\Delta$ and $\phi(x')=a$. 
The reverse of the element $x$ above does the job. So the element $x_G=xx'$ enjoys all the conditions in the theorem and its canonical length is even, at most 12. 
\end{proof}

From now on, we fix atoms $a,b\in G$ and an element $x_G\in G^0$ satisfying the conclusions of Proposition~\ref{P:Lox}. We denote by $r$ its canonical length. 

\begin{lemma}\label{L:XRigid}
\begin{itemize} 
\item[(a)] $x_G$ is rigid. 
\item[(b)] $x_G$ and $\partial x_G$ are not absorbable. 
\item[(c)] The left and right normal forms of~$\partial x_G$ coincide, and $\partial x_G$ is also rigid. Moreover, for each $m\geqslant 1$, we have $\partial (x^m)=(\partial x)^m$ and we will write $\partial x^m$. 
\end{itemize}
\end{lemma}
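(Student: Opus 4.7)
The plan is to treat the three items in order, extracting explicit information about the left normal form of~$\partial x_G$ from that of~$x_G$.

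For~(a), I would simply observe that property~(ii) gives $\iota(x_G)=\varphi(x_G)=a$, so the rigidity of~$x_G$ reduces to the statement that the pair $(a,a)$ is left-weighted, i.e.\ $S(a)\subseteq F(a)$. This is immediate because $S(a)=F(a)=\{a\}$ for any atom~$a$.

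For~(b), I would invoke Lemma~2.4 of~\cite{CalvezWiest}, according to which every positive middle factor of a positive absorbable element is itself absorbable. Property~(iii) exhibits $\partial b=b^{-1}\Delta$ as a simple factor in the normal form of~$x_G$, so $x_G = u\cdot \partial b\cdot w$ with $u,w\in P$; if $x_G$ were absorbable, so would be~$\partial b$, contradicting Example~\ref{example}. To handle~$\partial x_G$ I first need its leading simple factor. Using $s^{-1}\Delta=\partial s$ and $\Delta^{-1}s^{-1}\Delta=\tau(s)^{-1}$, an easy induction on~$r$ gives the telescoping
\[
\partial x_G \;=\; x_G^{-1}\Delta^r \;=\; \partial s_r\cdot\partial\tau(s_{r-1})\cdot\partial\tau^2(s_{r-2})\cdots \partial\tau^{r-1}(s_1),
\]
whose leading factor is $\partial s_r = \partial a$ --- once again the right complement of an atom, hence not absorbable by Example~\ref{example}, provided I have verified in~(c) that this product is indeed the left normal form of~$\partial x_G$.

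For~(c), I would set $v_i:=\partial\bigl(\tau^{i-1}(s_{r-i+1})\bigr)$, so that the telescoping reads $\partial x_G = v_1 v_2\cdots v_r$, a product of nontrivial simple elements distinct from~$\Delta$. To show this is simultaneously the left and the right normal form, I plan to use the standard identities valid for any nontrivial simple $s\neq \Delta$ in an Artin-Tits group of spherical type,
\[
S(\partial s) = V\setminus F(s),\qquad F(\partial s) = V\setminus\tau\bigl(S(s)\bigr),
\]
which follow from the square-free characterisation of simples recalled in Section~\ref{S:ArtinGroups}: for an atom $c$, $sc$ is simple iff $c\notin F(s)$ and $cs$ is simple iff $c\notin S(s)$. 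Substituting these formulas (and using $\tau(V)=V$), the containments $S(v_{i+1})\subseteq F(v_i)$ and $F(v_i)\subseteq S(v_{i+1})$ become equivalent, respectively, to $S(s_{r-i+1})\subseteq F(s_{r-i})$ and $F(s_{r-i})\subseteq S(s_{r-i+1})$, both of which hold by property~(i). Hence $v_1\cdots v_r$ is both the left and the right normal form of~$\partial x_G$. Rigidity of~$\partial x_G$ reduces to checking that the pair $(v_r,v_1)=(\partial\tau^{-1}(a),\partial a)$ is left-weighted (using $\tau^r=\mathrm{id}$ from~(iv) together with $s_r=s_1=a$); the same identities give $S(\partial a)=V\setminus\{a\}=F(\partial\tau^{-1}(a))$, so the inclusion holds. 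Finally, rigidity of~$x_G$ implies that the normal form of~$x_G^m$ is the $m$-fold concatenation of the normal form of~$x_G$, so $\sup(x_G^m)=mr$; combined with the centrality of~$\Delta^r$ from~(iv),
\[
(\partial x_G)^m = (x_G^{-1}\Delta^r)^m = (x_G^m)^{-1}\Delta^{mr} = \partial(x_G^m).
\]

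The main obstacle I expect is purely bookkeeping: establishing and carefully applying the two identities for $S(\partial s)$ and $F(\partial s)$, and tracking the $\tau$-powers through the telescoped expression for~$\partial x_G$. Once these are in place, every remaining claim becomes a one-line verification.
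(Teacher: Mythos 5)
Your proof is correct and takes essentially the same route as the paper's: rigidity of $x_G$ from left-weightedness of the pair $(a,a)$, non-absorbability of $x_G$ and $\partial x_G$ from the fact that each contains a complement of an atom as a positive factor (Example~\ref{example} plus \cite[Lemma 2.4]{CalvezWiest}), and rigidity of $\partial x_G$ by computing its first and last normal-form factors $\partial a$ and $\partial\tau^{-1}(a)$. You actually supply details the paper leaves implicit — namely the verification, via the identities $S(\partial s)=V\setminus F(s)$ and $F(\partial s)=V\setminus\tau(S(s))$, that the telescoped product of complements is simultaneously the left and the right normal form of $\partial x_G$, and the computation $\partial(x_G^m)=(\partial x_G)^m$ using $\sup(x_G^m)=mr$ and the centrality of $\Delta^r$.
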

\begin{proof}
(a) holds because the pair $a\cdot a $ is left-weighted.

(b) results respectively from Properties (ii) and (iii) of Proposition~\ref{P:Lox}, combined with Example~\ref{example} and \cite[Lemma 2.4]{CalvezWiest} 

For (c), in order to prove the rigidity of~$\partial x_G$, let $y_1\ldots y_r$ be the normal form of~$\partial x_G$. Then $y_1=x_r^{-1}\Delta=a^{-1}\Delta$ and $y_r=\tau^{r-1}(x_1^{-1})\Delta=\tau^{r-1}(a^{-1})\Delta = \Delta \tau^r(a^{-1}) = \Delta a^{-1}$. We see that the pair $(y_r, y_1)$ is left and right-weighted. 
\end{proof}

\begin{lemma}\label{L:InitialFinalFactors}
Let $z\in G^0$.
\begin{itemize} 
\item[(a)] Suppose that $k\geqslant 2$; if $x_G^k\preccurlyeq z$, then the left normal form of $z$ starts with $k-1$ copies of $x_G$.
\item[(b)] Suppose that $k\geqslant 1$; if $\partial x_G^k\preccurlyeq z$, then the left normal form of $z$ starts with $k$ copies of $\partial x_G$, 
\end{itemize}
\end{lemma}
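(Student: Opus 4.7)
For both parts, the plan is to write $z = x_G^k \cdot w$ (resp.\ $z = \partial x_G^k \cdot w$) with $w \in P$, and first verify that $w \in G^0$: otherwise $\Delta \preccurlyeq w$ could be pulled past the rigid initial block via the identity $x\Delta = \Delta\tau(x)$, forcing $\Delta \preccurlyeq z$ and contradicting $\inf(z) = 0$. Hence $w$ has a left normal form $w_1\cdots w_t$ with each $w_i$ a simple element distinct from $1$ and $\Delta$. Since $x_G$ and $\partial x_G$ are rigid (Lemma~\ref{L:XRigid}(a) and~(c)), the LNFs of $x_G^k$ and $\partial x_G^k = \partial(x_G^k)$ are simply the $k$-fold concatenations of the LNFs of $x_G$ and $\partial x_G$. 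The task reduces to controlling how the concatenated word $[\text{LNF of initial block}]\cdot w_1\cdots w_t$ left-normalizes. The standard algorithm absorbs the factors $w_j$ one at a time into the current LNF; each absorption proceeds by a leftward cascade of rearrangements that propagates a ``carry'' through the factors as long as left-weightedness fails. The crucial observation is that if at some step an absorption produces a factor equal to $\Delta$, then this $\Delta$ can be commuted to the very front of the word, strictly raising $\inf$; but every intermediate product is a left-divisor of $z$ and thus has infimum $0$, so no $\Delta$ may emerge during the entire computation.

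For part~(b), the last factor of $\partial x_G^k$ is $y_r = \Delta a^{-1}$ (Lemma~\ref{L:XRigid}(c)), with right complement $\partial y_r = a$, an atom. Any carry arriving at position $kr$ must therefore lie in $\{1, a\}$, and the value $a$ would produce $y_r\cdot a = \Delta$, excluded. So the first junction $(y_{kr}, w_1)$ is already left-weighted; no cascade occurs, and for every subsequent $w_j$ the absorption is trivial too (since the junctions $(w_{j-1}, w_j)$ within $w$ are already left-weighted). Hence the full concatenation $y_1\cdots y_{kr} w_1\cdots w_t$ is already in LNF, and its first $kr$ factors are exactly $k$ copies of $\partial x_G$.

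For part~(a) the analogous junction argument fails because $\partial a = \Delta a^{-1}$ is a large simple and the cascade can genuinely propagate leftward from the end of $x_G^k$. The remedy is to install a barrier via property~(iii) of Proposition~\ref{P:Lox}: at certain positions $(k-1)r + i$ and $(k-1)r + i + 1$ (with $2 \leqslant i \leqslant r-2$, these bounds coming from property~(ii) which forbids $b^{-1}\Delta$ from being the first or last factor within its copy), the LNF factors of $x_G^k$ are $b^{-1}\Delta$ and $\tau(b^{-1})\Delta$. A short direct computation, using crucially that $o(G) \in \{1, 2\}$ and hence $\tau^2 = \mathrm{id}$, gives $\partial(\tau(b^{-1})\Delta) = \tau^2(b) = b$ and $\tau(b^{-1})\Delta \cdot b = \Delta$. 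So any carry reaching position $(k-1)r + i + 1$ must lie in $\{1, b\}$, and the value $b$ is forbidden by the $\Delta$-excluding observation. Therefore the cascade never modifies the factor at position $(k-1)r + i + 1$ nor any factor to its left, at any step of the absorption of $w_1, \ldots, w_t$. By induction on the step number, the first $(k-1)r + i + 1$ factors of the LNF of each intermediate product coincide with those of $x_G^k$; in particular, the first $(k-1)r$ factors of the LNF of $z$ form the LNF of $x_G^{k-1}$, as claimed. The main delicate point is exactly this iterative bookkeeping: for the barrier argument to apply at step $j+1$, I need to already know that the factor at position $(k-1)r + i + 1$ is still $\tau(b^{-1})\Delta$ after step $j$, which is the inductive hypothesis to be maintained.
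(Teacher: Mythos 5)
Your proof is correct, and for part~(b) it is essentially the paper's own argument: the only content is that the junction pair is left-weighted because $\partial(\Delta a^{-1})=a$ is an atom, and $a\preccurlyeq A$ would force $\Delta\preccurlyeq z$. For part~(a) the paper gives no proof at all --- it simply cites \cite[Proposition 3.11]{CalvezWiest} --- and your barrier argument via property~(iii) of Proposition~\ref{P:Lox} is precisely the reason that property is built into $x_G$, so you have in effect reconstructed the intended proof. One remark on economy: the whole ``cascade with carries'' apparatus (which silently relies on the correctness of the single right-to-left normalization sweep, plus an induction over the absorption of $w_1,\ldots,w_t$) can be collapsed into a single junction check, exactly parallel to your part~(b). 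Write $z=(x_1\cdots x_p)\cdot R$ with $p=(k-1)r+i+1$, $x_p=\tau(b^{-1})\Delta$ and $R=x_{p+1}\cdots x_{kr}w$; then $\inf(R)=0$ and $b\not\preccurlyeq R$ (either would give $\Delta\preccurlyeq z$), so $\partial x_p\wedge\iota(R)=b\wedge\iota(R)=1$, the pair $(x_p,\iota(R))$ is left-weighted, and the left normal form of $z$ is $x_1\cdots x_p$ followed by that of $R$ --- in particular it begins with the $(k-1)r$ factors of $x_G^{k-1}$. This removes the iterative bookkeeping you flag as the delicate point. (Two trivial omissions, harmless: the case $w=1$, and the observation that $w\neq\Delta\cdot(\ldots)$ so that $w_1\neq\Delta$.)
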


\begin{proof} (a) is the statement of~\cite[Proposition 3.11]{CalvezWiest}. 

(b) We may assume that $z$ is not itself a power of $\partial x_G$ otherwise the result is trivial. Thus there is a positive non-trivial element $A$ such that $z=\partial x_G^k\,  A$ and we claim that this is the left normal form. Indeed, $\phi(\partial x_G^k)$ is the complement of an atom, so as $\inf(z)=0$, it must be that $\phi(\partial x_G^k)\cdot \iota (A)$ is left-weighted. 
\end{proof}

For simplicity, for every integer $k\in \Z$, we will denote by $X^k$ the vertex $x_G^k\Delta^{\Z}$ of $\mathcal C_{AL}(G)$. Notice that $\underline{X^k}=x_G^k$ if $k\geqslant 0$ and $\underline{X^k}=\partial x_G^{-k}$ if $k<0$;
also, for any $k\in \Z$, and any vertex $v$ in $\mathcal C_{AL}$, the preferred path $A(X^k,v)$ corresponds to the left normal form of the distinguished representative of the vertex $x_G^{-k}\cdot v$.

We consider the set $\{X^k,k\in \Z\}$; we think of it as a bi-infinite preferred path or equivalently as a path subgraph of $\mathcal C_{AL}$ made of the preferred paths (of length $r$) joining consecutive vertices. Our main tool will be a sort of projection from $\mathcal C_{AL}$ to this set. Before defining it we need to show: 

\begin{lemma}\label{L:Well-DefinedProjection}
Let $v$ be a vertex of $\mathcal C_{AL}$. Suppose that the non-negative integer $m$ satisfies $\partial x_G^m\preccurlyeq \underline v$ and $\partial x_G^{m+1}\not\preccurlyeq \underline v$.
Then for $j\geqslant 1$, the element $x_G^{j-1}$ is a prefix of the distinguished representative of $x_G^{m+j}\cdot v$.
\end{lemma}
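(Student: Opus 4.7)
The plan is to use the hypothesis $\partial x_G^m \preccurlyeq \underline v$ to write $\underline v = \partial x_G^m\, w$ for some $w\in P$, and then to compute the distinguished representative of $x_G^{m+j}\cdot v$ explicitly. Setting $r=\sup(x_G)$, the identity $x_G^m\cdot\partial x_G^m = \Delta^{rm}$ combined with the centrality of $\Delta^r$ (Proposition~\ref{P:Lox}(iv)) yields
\[ x_G^{m+j}\underline v \;=\; x_G^j\,\Delta^{rm}\,w \;=\; \Delta^{rm}\,x_G^j\,w. \]
Writing $p_j=\inf(x_G^j w)$, the infimum of $x_G^{m+j}\underline v$ is $rm+p_j$, and its distinguished representative is $x_G^j w\,\Delta^{-p_j}$. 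Thus the desired conclusion $x_G^{j-1}\preccurlyeq x_G^j w \Delta^{-p_j}$ is equivalent to $\Delta^{p_j}\preccurlyeq x_G w$.

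The heart of the argument is the claim that $p_j=\inf(x_G w)=:p$ for every $j\geqslant 1$. The inequality $p_j\geqslant p$ follows from rigidity of $x_G$ (Lemma~\ref{L:XRigid}(a)), which gives $\inf(x_G^{j-1})=0$. For the reverse inequality, I would first translate the hypothesis $\partial x_G^{m+1}\not\preccurlyeq \underline v$: the same centrality computation shows that $\partial x_G^{m+1}\preccurlyeq \partial x_G^m w$ is equivalent to $\Delta^r\preccurlyeq x_G w$, so the hypothesis amounts to $\partial x_G\not\preccurlyeq w$. I would then apply Lemma~\ref{Lemma:Garside} with $u=x_G^j$ and $v=w$. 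Since $x_G$ is rigid with its left and right normal forms coinciding, the right normal form of $x_G^j$ is the $j$-fold concatenation of that of $x_G$, so for every $k\geqslant r$ the product of the rightmost $k$ factors splits as $\phi_k=\alpha\cdot x_G$ with $\alpha\in G^0$ of supremum $k-r$. A short manipulation using $\tau^r=\mathrm{id}$ gives
\[ \partial\phi_k \;=\; x_G^{-1}\alpha^{-1}\Delta^k \;=\; \partial x_G\cdot\tau^{r}(\alpha)^{-1}\Delta^{k-r} \;=\; \partial x_G\cdot\partial\alpha, \]
so $\partial x_G\preccurlyeq \partial\phi_k$. Therefore any $k\geqslant r$ with $\partial\phi_k\preccurlyeq w$ would force $\partial x_G\preccurlyeq w$, contradicting our translation of the hypothesis. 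Hence the maximum in Lemma~\ref{Lemma:Garside} is achieved at some $k< r$, where $\phi_k$ is the same whether computed for $x_G^j$ or for $x_G$, giving $p_j=p$.

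Once $p_j=p$ is in hand, $\Delta^p\preccurlyeq x_G w$ holds by the very definition of $p=\inf(x_G w)$, completing the argument. The main technical obstacle I anticipate is precisely the identity $\partial\phi_k=\partial x_G\cdot\partial\alpha$ for $k>r$: it is exactly here that the centrality of $\Delta^r$ enters, and thus condition~(iv) of Proposition~\ref{P:Lox} is used in an essential way. The remaining verifications -- that the concatenation of right normal forms of $x_G$ is again a right normal form, and the reduction of infima -- are routine bookkeeping with the Garside identities.
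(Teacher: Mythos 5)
Your proposal is correct and follows essentially the same route as the paper: decompose $\underline v=\partial x_G^m A$, use centrality of $\Delta^{rm}$ to reduce to the distinguished representative of $x_G^jA$, and apply Lemma~\ref{Lemma:Garside} with $u=x_G^j$, the key point being that $\partial x_G\not\preccurlyeq A$ forces $\inf(x_G^jA)\leqslant r-1$ because $\partial x_G\preccurlyeq\partial\phi_k$ for all $k\geqslant r$. The paper invokes the second clause of Lemma~\ref{Lemma:Garside} directly instead of passing through the identity $p_j=p$, but your version supplies the detail ($\partial\phi_k=\partial x_G\cdot\partial\alpha$) that the paper leaves implicit.
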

\begin{proof}
By hypothesis, $\underline v=\partial x_G^m A$, where $A$ is a positive element of~$G$ such that $\partial x_G\not\preccurlyeq A$, and also $x_G\not\preccurlyeq A$ (because $\inf(\underline v)=0$). Let $j\geqslant 1$ and consider the vertex $x_G^{m+j}\cdot v$. It is represented by $x_G^{j+m}\partial x_G^m\, A=x_G^{j}\Delta^{rm}A=x_G^{j}A\Delta^{rm}$ (as $r$ is a multiple of~$o(G)$ and $\Delta^{o(G)}$ is central) and its distinguished representative is $\underline{x_G^{m+j}\cdot v}=x_G^jA\Delta^{-\inf(x_G^jA)}$. Now, we use Lemma~\ref{Lemma:Garside} (keeping in mind that the right and left normal form of $x_G^j$ are the same). 
As $\partial x_G$ is not a prefix of $A$, the lemma says that $\inf(x_G^jA)\leqslant r-1$ and also that 
$x_G^{j-1}\preccurlyeq x_G^jA\Delta^{-\inf(x_G^jA)}$, as claimed. 
\end{proof}

It follows in particular that for any vertex $v$, the set $\{k\in\Z,\ x_G\not\preccurlyeq \underline{x_G^k\cdot v}\}$ is bounded above. This allows us to define the desired projection. 

\begin{definition}\label{D:ProjToAxis}
Let $v$ be a vertex of $\mathcal C_{AL}$. We define the integer 
$$\lambda(v)=-\max\{k\in\mathbb Z, \ x_G\not\preccurlyeq \underline{x_G^{k}\cdot v}\}.$$ 
The projection is then given by $\pi(v)=X^{\lambda(v)}$. 
\end{definition}

\begin{example}
\label{Ex:Proj}
\begin{itemize}
\item [(a)] For $v=X^n=x_G^n\Delta^\mathbb Z$ (avec $n\in\mathbb Z$) we have $\lambda(v)=n$ and $\pi(v)=v$. 

\item [(b)] If $A\in G^0$ such that $\inf(x_G\, A)=0$ and such that $x_G\not\preccurlyeq A$ then for the vertex $v=x_G^n\, A\Delta^\mathbb Z$ (with $n\geqslant 1$) we have $\pi(v)=X^n$. 

\item [(c)] If $\lambda(v)\geqslant 1$, then $x_G^{\lambda(v)}\preccurlyeq \underline v$. If $\lambda(v)\leqslant -2$, then $\partial x_G^{-\lambda(v)-1}\preccurlyeq \underline v$. These two statements are not obvious -- indeed, they are special cases ($j=\lambda(v)$ and $j=-\lambda(v)$, respectively) of the following lemma.
\end{itemize}
\end{example}

\begin{lemma}\label{L:Partial}
Let $v$ be a vertex of $\mathcal C_{AL}$ and $\lambda=\lambda(v)$. 	
\begin{itemize}
\item [(a)] 
For every integer $j\geqslant 1$, the element~$x_G^j$ is a prefix of the distinguished representative of $x_G^{-\lambda+j}\cdot v$. 
\item[(b)] For every integer $j\geqslant 2$, the element~$\partial x_G^{j-1}$ is a prefix of the distinguished representative of $x_G^{-\lambda-j}\cdot v$.
\end{itemize}
\end{lemma}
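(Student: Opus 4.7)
The plan is to prove both parts by induction on $j$, using the rigidity of $x_G$ (Lemma~\ref{L:XRigid}(a)) and the centrality of $\Delta^r$, where $r=\sup(x_G)$ is a multiple of $o(G)$ by Proposition~\ref{P:Lox}(iv). For brevity, set $w_k:=\underline{x_G^k\cdot v}$ throughout.

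\textbf{Part (a).} The base case $j=1$ is immediate from the definition of $\lambda$: since $-\lambda+1>-\lambda$, we have $x_G\preccurlyeq w_{-\lambda+1}$. For the inductive step, assume $x_G^j\preccurlyeq w_{-\lambda+j}$ and write $w_{-\lambda+j}=x_G^jA_j$ with $A_j\in G^0$; that $A_j$ lies in $G^0$ rather than merely in $P$ follows from $w_{-\lambda+j}\in G^0$, since $\Delta\preccurlyeq A_j$ would give $\Delta\preccurlyeq x_G^jA_j=w_{-\lambda+j}$. Computing $w_{-\lambda+j+1}$ as the distinguished representative of $x_G\cdot w_{-\lambda+j}$ yields $w_{-\lambda+j+1}=x_G^{j+1}A_j\Delta^{-k}$ with $k:=\inf(x_G^{j+1}A_j)$. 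On the other hand, the definition of $\lambda$ gives $x_G\preccurlyeq w_{-\lambda+j+1}$, so $w_{-\lambda+j+1}=x_GA_{j+1}$ with $A_{j+1}\in P$. Equating the two expressions yields $A_{j+1}=w_{-\lambda+j}\Delta^{-k}$, and the positivity of $A_{j+1}$ combined with $\inf(w_{-\lambda+j})=0$ forces $k=0$. Thus $w_{-\lambda+j+1}=x_G^{j+1}A_j$ and $x_G^{j+1}\preccurlyeq w_{-\lambda+j+1}$.

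\textbf{Part (b).} Using $x_G^{-1}=\Delta^{-r}\partial x_G$ with $\Delta^r$ central, the vertex $x_G^{-\lambda-j}\cdot v$ is represented, modulo a central power of $\Delta$, by $(\partial x_G)^jw_{-\lambda}$. Hence $w_{-\lambda-j}=(\partial x_G)^jw_{-\lambda}\Delta^{-K_j}$ with $K_j:=\inf((\partial x_G)^jw_{-\lambda})$. Recalling $\partial x_G^{j-1}=(\partial x_G)^{j-1}$ (Lemma~\ref{L:XRigid}(c)), left-cancellation shows that $\partial x_G^{j-1}\preccurlyeq w_{-\lambda-j}$ is equivalent to $\partial x_G\,w_{-\lambda}\Delta^{-K_j}\in P$, i.e.\ to $K_j\leqslant K_1$. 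Since $K_j\geqslant K_1$ holds trivially (left-multiplying by a positive element never decreases the infimum), the problem reduces to showing $K_j=K_1$ for all $j\geqslant 1$.

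\textbf{Main obstacle.} The core step is the equality $K_j=K_1$. We apply Lemma~\ref{Lemma:Garside} to $u=(\partial x_G)^j$ and $v=w_{-\lambda}$: since the right normal form of $(\partial x_G)^j$ is the $j$-fold concatenation of that of $\partial x_G$, we have $K_j=\max\{k\in\{0,\ldots,rj\}:\partial\phi_k\preccurlyeq w_{-\lambda}\}$, where $\phi_k$ denotes the product of the rightmost $k$ factors. Writing $k=ra+b$ with $0\leqslant b<r$, we have $\phi_k=S_b\cdot(\partial x_G)^a$, where $S_b$ is the rightmost $b$ factors of $\partial x_G$. Using $(\partial x_G)^a=x_G^{-a}\Delta^{ra}$ together with $\tau^{ra}=\mathrm{id}$, a short calculation gives $\partial\phi_k=x_G^a\cdot(S_b^{-1}\Delta^b)\in P$, so $x_G\preccurlyeq\partial\phi_k$ whenever $a\geqslant 1$. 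Combined with $x_G\not\preccurlyeq w_{-\lambda}$ (the defining property of $\lambda$), this rules out every $k\geqslant r$ from achieving the maximum. Within $\{0,\ldots,r-1\}$, the element $\partial\phi_k$ does not depend on $j$, so the maximum is the same for all $j\geqslant 1$; hence $K_j=K_1$, as required.
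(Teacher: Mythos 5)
Your proof is correct. For part (b) you take essentially the same route as the paper: apply Lemma~\ref{Lemma:Garside} to $u=(\partial x_G)^j$ and to $\underline{x_G^{-\lambda}\cdot v}$, and use the rigidity of $\partial x_G$, the identity $\partial(\partial x_G)=x_G$ and the defining property $x_G\not\preccurlyeq\underline{x_G^{-\lambda}\cdot v}$ to bound the infimum by $r-1$; your reformulation via the equality $K_j=K_1$ followed by left-cancellation is a repackaging of the paper's direct appeal to the second conclusion of Lemma~\ref{Lemma:Garside} (that $\iota_{rj-k}\preccurlyeq uv\Delta^{-k}$), and your explicit computation of $\partial\phi_k$ for $k=ra+b$ makes precise why no $k\geqslant r$ can realize the maximum. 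For part (a) your argument genuinely differs: the paper applies Lemma~\ref{Lemma:Garside} once, to $u=x_G^j$ and the positive element $A$ defined by $\underline{x_G^{-\lambda+1}\cdot v}=x_GA$ (rigidity ensuring the relevant final factor of $x_G^j$ is independent of $j$), whereas you run an induction on $j$ that invokes the definition of $\lambda$ at every level $-\lambda+j+1$ rather than only at $-\lambda+1$, and replaces the normal-form machinery by an elementary positivity-plus-infimum argument forcing $k=0$. Both are valid; the paper's version isolates the single point where the definition of $\lambda$ enters, while yours dispenses with Lemma~\ref{Lemma:Garside} in part (a) entirely at the cost of using the full strength of the definition of $\lambda$.
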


\begin{proof}
(a) By definition of the projection, the distinguished representative of $x_G^{-\lambda+1}\cdot v$ can be written $x_GA$ for a positive $A$; from Lemma~\ref{Lemma:Garside}, it follows that $\inf(x_G^jA)=0$ for all $j\geqslant 1$, and hence the claim. 

(b) First, $x_G^{-j-\lambda}\cdot v= \partial x_G^j\Delta^{-rj}x_G^{-\lambda}\cdot v=\partial x_G^jx_G^{-\lambda}\cdot v$ (as $o(G)$ divides~$r$, so that $\Delta^r$ is central). 
Observe that $\partial x_G^j=(\partial x_G)^j$ has the same right and left normal forms, and that $\partial (\partial x_G)=x_G$, which is not a prefix of $\underline{x_G^{-\lambda}\cdot v}$, by definition of the projection. Lemma~\ref{Lemma:Garside} applies and says that $k=\inf(\partial x_G^{j}\underline{x_G^{-\lambda}\cdot v})\leqslant r-1$ and that $\partial x_G^{j-1}\preccurlyeq \partial x_G^{j}\underline{x_G^{-\lambda}\cdot v}\Delta^{-k}$, hence the claim.
$\Box$\end{proof}

The following is crucial and was already shown~\cite[Proposition 3.13]{CalvezWiest}; however the current proof, with the help of the projection defined above, becomes clearer than that in~\cite{CalvezWiest}. 

\begin{proposition}\label{P:Main}
Let $v_1,v_2$ be two vertices of the additional length graph; let $\lambda_1=\lambda(v_1)$, $\lambda_2=\lambda(v_2)$. 
Suppose that $\lambda_2\geqslant \lambda_1+3$. Then the preferred path $A(v_1,v_2)$ contains the subpath $A(X^{\lambda_1+1},X^{\lambda_2-1})$. 
\end{proposition}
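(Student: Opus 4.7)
The plan is first to reduce by equivariance (Lemma~\ref{L:Equivariant}) to the case $\lambda_1 = 0$: translating by $x_G^{-\lambda_1}$ permutes vertices of $\mathcal C_{AL}$ equivariantly and shifts edge labels only by $\tau$, so the assertion of the proposition transfers. Setting $N := \lambda_2 \geq 3$, Lemma~\ref{L:Partial}(a) gives $x_G^N \preccurlyeq \underline{v_2}$, so $\underline{v_2} = x_G^N B$ for some $B \in G^0$ with $x_G \not\preccurlyeq B$; Lemma~\ref{L:InitialFinalFactors}(a) applied with $k = N$ then forces the left normal form of $\underline{v_2}$ to begin with exactly $N - 1$ consecutive copies of the normal form of $x_G$. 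In particular, the preferred path $A(X^0, v_2)$ already visits $X^1, X^2, \ldots, X^{N-1}$ and contains $A(X^1, X^{N-1})$ as an initial subpath.

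The core technical step is to transplant this structure onto $A(v_1, v_2)$. By property (PP2), $A(v_1, v_2) = A(v_1, w) \cdot A(w, v_2)$ where $w = (\underline{v_1} \wedge \underline{v_2})\Delta^{\mathbb Z}$, which reduces the problem to showing that $A(w, v_2)$ contains $A(X^1, X^{N-1})$. The crucial claim is $\underline w \preccurlyeq x_G$: on one hand $\underline w \preccurlyeq \underline{v_1}$ and $x_G \not\preccurlyeq \underline{v_1}$ (by $\lambda_1 = 0$) force $x_G \not\preccurlyeq \underline w$; on the other, $\underline w \preccurlyeq \underline{v_2}$ whose leading $r$ normal-form factors are exactly those of $x_G$. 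A factor-by-factor lattice comparison (using that any simple divisor of a positive element lies below the first simple factor of its left normal form) then pins $\underline w$ inside $x_G$.

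Writing $x_G = \underline w \cdot C$ with $C \in G^0$, one obtains
\[
\underline w^{-1}\underline{v_2} \;=\; C \cdot x_G^{N-1}\cdot B,
\]
a positive element of infimum $0$ (otherwise $\Delta \preccurlyeq \underline{v_2}$, contradicting $\inf \underline{v_2} = 0$). The rigidity of $x_G$ (Lemma~\ref{L:XRigid}(a)) together with $\underline w \cdot C = x_G$ forces $(\varphi(C), \iota(x_G))$ to be left-weighted, while $(\varphi(x_G), \iota(B))$ is left-weighted because it already occurs as a transition in the known left normal form of $\underline{v_2}$. Hence the concatenation of the left normal forms of $C$, of $N-1$ copies of $x_G$, and of $B$ is already in left normal form, and is therefore the left normal form of $\underline w^{-1}\underline{v_2}$. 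Reading the preferred path $A(w, v_2)$ along this normal form, after the initial $\sup(C)$ edges one reaches $\underline w \cdot C \cdot \Delta^{\mathbb Z} = x_G \Delta^{\mathbb Z} = X^1$; the next $(N-2)r$ edges realise $N-2$ full copies of the normal form of $x_G$, visiting $X^2, \ldots, X^{N-1}$, which is exactly the subpath $A(X^1, X^{N-1})$. Combined with (PP2), this exhibits $A(X^{\lambda_1+1}, X^{\lambda_2-1})$ as a subpath of $A(v_1, v_2)$. The main obstacle is the lattice argument for $\underline w \preccurlyeq x_G$ together with the rigidity-based verification that the above concatenation is in left normal form; both rely on the fine control on initial and final factors of $x_G$ ensured by Proposition~\ref{P:Lox}.
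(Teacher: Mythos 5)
Your proof takes a genuinely different route from the paper: instead of working with the turning point $w=(\underline{v_1}\wedge\underline{v_2})\Delta^{\mathbb Z}$ from (PP2), the paper looks at the two preferred paths $A(X^{\lambda_1},v_2)$ and $A(v_1,X^{\lambda_2})$, uses Lemma~\ref{L:Partial}(b) and Lemma~\ref{L:InitialFinalFactors}(b) (the $\partial x_G$ statements, which you never invoke) together with (PP1) to show that the first has $A(X^{\lambda_1},X^{\lambda_2-1})$ as an initial segment and the second has $A(X^{\lambda_1+1},X^{\lambda_2})$ as a final segment, and then glues the two along the common middle. Unfortunately, your route has a genuine gap at its central claim $\underline w\preccurlyeq x_G$. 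The ``factor-by-factor lattice comparison'' you invoke only yields $w_1\cdots w_i\preccurlyeq x_1\cdots x_i$ for $i\leqslant\min(\sup(\underline w),r)$, because the relevant fact is $p\wedge\Delta^i\preccurlyeq u\wedge\Delta^i$ for a prefix $p$ of $u$. This proves $\underline w\preccurlyeq x_G$ only under the additional hypothesis $\sup(\underline w)\leqslant r$; if $\sup(\underline w)>r$ the conclusion is simply false (a prefix of $x_G$ has canonical length at most $r$), so what you actually need is to rule that case out, i.e.\ to show that every prefix of $\underline{v_2}$ of canonical length greater than $r$ is a right-multiple of $x_G$ (which would contradict $x_G\not\preccurlyeq\underline{v_1}$). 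Nothing in general Garside theory gives this: a positive element can have proper prefixes of the same canonical length that are incomparable with a given initial block of its normal form. The statement is plausibly true for the specific $x_G$ of Proposition~\ref{P:Lox} (indeed it follows \emph{a posteriori} from Proposition~\ref{P:Main} itself), but it is precisely the hard content of the proposition and cannot be asserted in one line; as written, your argument is essentially circular at this point.

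There is also a secondary error in the assembly step. Lemma~\ref{L:InitialFinalFactors}(a), applied with $k=N$, only guarantees that the left normal form of $\underline{v_2}=x_G^NB$ begins with $N-1$ copies of the normal form of $x_G$ \emph{followed by the normal form of $x_GB$}; it does not split the remainder as $x_G$ followed by $B$. Hence the transition $(\varphi(x_G),\iota(B))$ does \emph{not} ``already occur in the known left normal form of $\underline{v_2}$'' --- the transition that occurs there is $(\varphi(x_G),\iota(x_GB))$, and $\iota(B)$ may well fail to satisfy $S(\iota(B))\subseteq F(a)=\{a\}$. This particular issue is repairable by writing $\underline w^{-1}\underline{v_2}=C\,x_G^{N-2}\,(x_GB)$ instead, but the first gap is not: to make your approach work you would need an independent proof that every prefix of $\underline{v_2}$ is $\preccurlyeq$-comparable with $x_G$, and I recommend instead following the gluing strategy via Lemma~\ref{L:Partial}(b), which sidesteps the gcd vertex entirely.
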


\begin{proof} See Figure~\ref{Pic:MainProp}. 
Consider first the path $\gamma_2=A(X^{\lambda_1}, v_2)$. It is given by  the left normal form of the distinguished representative of the vertex $x_G^{-\lambda_1}\cdot v_2$. 
By Lemma~\ref{L:Partial}(a) (with $j=\lambda_2-\lambda_1$
and $\lambda=\lambda_2$), we know that $x_G^{\lambda_2-\lambda_1}\preccurlyeq \underline{x_G^{-\lambda_1}\cdot v_2}$. According to Lemma~\ref{L:InitialFinalFactors}(a), this says that the path $A(X^{\lambda_1},v_2)$ has $A(X^{\lambda_1},X^{\lambda_2-1})$ as an initial segment. 

\begin{figure}[hbt]
\begin{center}\includegraphics[height=3cm]{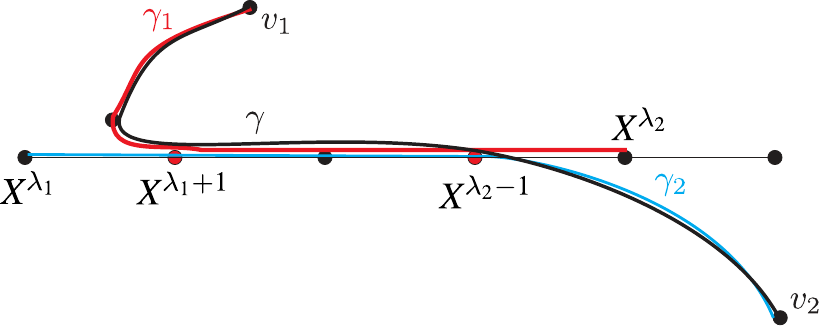}
\end{center}
\caption{The proof of Proposition~\ref{P:Main}.} \label{Pic:MainProp}
\end{figure}

Consider now the path $A(X^{\lambda_2},v_1)$. It is given by the left normal form of the distinguished representative of the vertex $x_G^{-\lambda_2}\cdot v_1$. By Lemma~\ref{L:Partial}(b) (with $j=\lambda_2-\lambda_1$), 
we have that $\partial x_G^{\lambda_2-\lambda_1-1}\preccurlyeq \underline{x_G^{-\lambda_2}\cdot v_1}$. By Lemma~\ref{L:InitialFinalFactors}(b), the left normal form of the latter starts with $\lambda_2-\lambda_1-1$ copies of $\partial x_G$. Because preferred paths are symmetric (PP1), this is equivalent to saying that $\gamma_1=A(v_1,X^{\lambda_2})$ has $A(X^{\lambda_1+1}, X^{\lambda_2})$ as a final segment. 

We see finally that $\gamma_1=A(v_1, X^{\lambda_2})$ and $\gamma_2$ coincide along the subpath $A(X^{\lambda_1+1}, X^{\lambda_2-1})$. Consider the path $\gamma$ formed by the subpath of $\gamma_1$ between $v_1$ and $X^{\lambda_2-1}$, followed by the subpath of $\gamma_2$  between $X^{\lambda_2-1}$ and $v_2$. Observe that $\gamma$ connects $v_1$ and $v_2$ and that the product of the labels of the successive edges along $\gamma$ gives a left normal form. This says that $\gamma = A(v_1, v_2)$. Since $\gamma$ has the claimed subpath, the proposition is shown. 
\end{proof}

We are now ready to finish the proof of Theorem~\ref{T:InfDiam}. We recall what is left to be done: throughout this section, we are dealing with the case of an~\emph{irreducible} Artin-Tits group of spherical type~$G$, and we have to prove that the graph~$\mathcal C_{AL}(G)$ is of infinite diameter. We will actually prove the stronger result that the element $x_G$ from Proposition~\ref{P:Lox} acts in a loxodromic fashion on the graph:

\begin{proposition}\label{P:XLox}
For any integer $N\in\Z$, 
$$d_{AL}\!\left(1,X^N\right)\geqslant \frac{|N|}{2}.$$
\end{proposition}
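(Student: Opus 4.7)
The plan is to prove Proposition~\ref{P:XLox} by showing that the function $\lambda$ from Definition~\ref{D:ProjToAxis} varies by at most~$2$ along every edge of $\mathcal C_{AL}(G)$, that is, $|\lambda(u) - \lambda(v)| \leq 2$ whenever $u, v$ are adjacent vertices. Granting this, and noting that $\lambda(1) = 0$ and $\lambda(X^N) = N$ by Example~\ref{Ex:Proj}(a), any edge-path $1 = v_0, v_1, \ldots, v_n = X^N$ in $\mathcal C_{AL}(G)$ satisfies
\[
|N| = |\lambda(v_0) - \lambda(v_n)| \leq \sum_{i=0}^{n-1} |\lambda(v_i) - \lambda(v_{i+1})| \leq 2n,
\]
yielding the desired bound $d_{AL}(1, X^N) \geq |N|/2$.

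To establish this coarse Lipschitz property, I will argue by contradiction: assume $u, v$ are adjacent with $\lambda(v) \geq \lambda(u) + 3$ (the reverse inequality is handled symmetrically using property~(PP1)). By Proposition~\ref{P:Main}, the preferred path $A(u, v)$ contains $A(X^{\lambda(u)+1}, X^{\lambda(v)-1})$ as a subpath; since $x_G$ is rigid of canonical length~$r$ (Lemma~\ref{L:XRigid}(a)), this subpath consists of $r(\lambda(v) - \lambda(u) - 2) \geq r \geq 2$ edges. A case analysis on the type of edge joining $u$ and $v$ will then produce the contradiction.

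In the first case, $u$ and $v$ are connected by a simple element $s \notin \{1, \Delta\}$, and the distinguished representative of $\underline{u}^{-1} \cdot v$ is simply $s$, of canonical length one; the whole preferred path $A(u, v)$ therefore has a single edge, contradicting the existence of a subpath of length $\geq 2$. In the second case, $u$ and $v$ are connected by an absorbable element, which—after possibly exchanging $u$ and $v$ and invoking Lemma~2.3 of~\cite{CalvezWiest} to replace the element by a conjugate of its inverse—may be assumed to lie in $G^0$; call it $y$. Then the distinguished representative of $\underline{u}^{-1} \cdot v$ equals $y$, and by Lemma~\ref{L:Equivariant}, the labels of the subpath $A(X^{\lambda(u)+1}, X^{\lambda(v)-1})$, appearing as $r(\lambda(v) - \lambda(u) - 2)$ consecutive factors of the left normal form of~$y$, form the left normal form of $\tau^{\varepsilon}(x_G^{\lambda(v) - \lambda(u) - 2})$ for some $\varepsilon \in \{0, 1\}$. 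In particular, $y$ admits a decomposition $y = p \cdot \tau^{\varepsilon}(x_G) \cdot q$ with $p, q \in P$. Lemma~2.4 of~\cite{CalvezWiest} then forces $\tau^{\varepsilon}(x_G)$ to be absorbable, and since absorbability is preserved under~$\tau$ (Lemma~2.3 of~\cite{CalvezWiest}), $x_G$ itself would be absorbable, contradicting Lemma~\ref{L:XRigid}(b).

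The main obstacle is the absorbable-edge case, in which the preferred path $A(u, v)$ can be arbitrarily long so that the naive length-counting argument used for simple edges breaks down. The resolution exploits the subfactor closure of absorbable elements (\cite[Lemma 2.4]{CalvezWiest}) together with the fact that $x_G$ was engineered in Proposition~\ref{P:Lox} specifically to be non-absorbable.
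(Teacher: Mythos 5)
Your proposal is correct and is essentially the paper's own argument: the paper phrases it contrapositively (a path shorter than $|N|/2$ would force some edge with $\lambda$-jump at least $3$, which Proposition~\ref{P:Main} rules out since neither a simple nor an absorbable edge can carry a full copy of $x_G$ or $\partial x_G$ in its normal form), whereas you prove the equivalent statement that $\lambda$ changes by at most $2$ across any edge, and you usefully spell out the case analysis the paper leaves implicit. One small correction: after exchanging $u$ and $v$ in the absorbable case, the labels of the relevant subpath are consecutive normal-form factors of a power of $\partial x_G$ (not of $x_G$), so the contradiction in that subcase comes from the non-absorbability of $\partial x_G$, which is exactly why Lemma~\ref{L:XRigid}(b) records both facts.
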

\begin{proof}
First we observe that it is sufficient to show the claim for $N>0$.
Suppose for a contradiction that there is an $N>0$ such that $k=d_{AL}(1,X^N)<\frac{N}{2}$. Let $v_0=1, v_1, \ldots v_k=X^N$ be the vertices along a path of length $k$ between $1$ and $X^N$. We have $\lambda(v_0)=0$ and $\lambda(v_k)=N$, so there must exist some $i$ between $0$ and $k-1$ such that $\lambda(v_{i+1})-\lambda(v_i)\geqslant 3$. By hypothesis, these two vertices are at distance 1 in $\mathcal C_{AL}$ so that they must be connected by a simple element or by an absorbable one. On the other hand, Proposition~\ref{P:Main} says that the preferred path between them contains the subpath $A(X^{\lambda(v_{i})+1},X^{\lambda(v_{i+1})-1})$. 
In view of Lemma \ref{L:XRigid}, as $x_G$ is neither absorbable nor simple (and neither is $\partial x_G$), this is a contradiction.
\end{proof}


\section{Acylindrical hyperbolicity}\label{S:Acylindrical}
Throughout this section, $G$ is an irreducible Artin group of spherical type. 
Recall that in this context, the least central power $o(G)$ of the Garside element $\Delta$ is 1 or 2 and 
that $\langle \Delta^{o(G)}\rangle$ is exactly the center of $G$. We consider the left-isometric action of the quotient $G\!_{\Delta}=G/Z(G)$ on $\mathcal C_{AL}(G)$. We fix  an element $x_G$ of $G^0$ as in Proposition~\ref{P:Lox}; we denote by $r$ its (even) canonical length. 
We know that the action of $x_G$ on $\mathcal C_{AL}(G)$ is loxodromic; of course the action of its class in $G\!_{\Delta}$ is loxodromic as well. By abuse of notation, we will also write $x_G$ for the class of~$x_G$ in~$G\!_{\Delta}$. 

Our goal now is to prove Theorems~\ref{T:WPD} and~\ref{T:Acylindrical}.
We first recall the definition of a Weakly Properly Discontinuous action from~\cite{BestvinaFujiwara02}: 

\begin{definition}
The loxodromic element $x_G$ of ${G}\!_{\Delta}$ is WPD if for every $v\in \mathcal C_{AL}$, and every $\kappa>0$, there exists $N>0$ such that the set $$\left\{\gamma \in {G}\!_{\Delta}\ |\  d_{AL}\!\left(v,\gamma\cdot v\right)\leqslant \kappa,\  d_{AL}\!\left(x_G^{N}\cdot v ,(\gamma x_G^{N})\cdot v\right)\leqslant \kappa\right\}$$ is finite.  
\end{definition}

The main step towards proving Theorem~\ref{T:WPD} is to prove it in the special case when $v$ is the identity vertex of ~$\mathcal C_{AL}$, i.e.\ to prove the following result:

\begin{proposition}\label{prop2}
For every $\kappa>0$, there exists $N>0$ such that the set 
$$\left\{\gamma \in {G}\!_{\Delta}\ | \ d_{AL}\!\left(1,\gamma\cdot 1\right)\leqslant \kappa,\  d_{AL}\!\left(x_G^{N}\cdot 1 ,(\gamma x_G^{N})\cdot 1\right)\leqslant \kappa\right\}$$ 
is finite.
\end{proposition}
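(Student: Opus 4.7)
The plan is to show that, for $N$ sufficiently large (linearly in $\kappa$), each $\gamma$ in the set of Proposition~\ref{prop2} is determined, up to $o(G)\leq 2$ choices, by the single vertex $\gamma\cdot X^{i_\ast}$ with $i_\ast:=\lfloor N/2\rfloor$, and that this vertex takes only finitely many values depending only on $\kappa$.

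First, I would establish the coarse Lipschitz estimate on the projection $\lambda$ from Definition~\ref{D:ProjToAxis}:
\[|\lambda(u)-\lambda(v)|\leq 2\,d_{AL}(u,v)+158\]
for all vertices $u,v$ of $\mathcal C_{AL}$. Indeed, when $\lambda(v)-\lambda(u)\geq 3$, Proposition~\ref{P:Main} places the axis subpath $A(X^{\lambda(u)+1},X^{\lambda(v)-1})$ inside $A(u,v)$; the $39$-Hausdorff bound in Theorem~\ref{T:QuasiGeodesics} yields $d_{AL}(X^{\lambda(u)+1},X^{\lambda(v)-1})\leq d_{AL}(u,v)+78$, and Proposition~\ref{P:XLox} supplies the matching lower bound $(\lambda(v)-\lambda(u)-2)/2$. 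Writing $\lambda_0:=\lambda(\gamma\cdot 1)$ and $\lambda_N:=\lambda(\gamma\cdot X^N)$, and using $\lambda(1)=0$ and $\lambda(X^N)=N$ (Example~\ref{Ex:Proj}(a)), every $\gamma$ in the set satisfies $|\lambda_0|\leq 2\kappa+158$ and $|\lambda_N-N|\leq 2\kappa+158$.

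Next, fix $N$ large enough that $\lambda_N-\lambda_0\geq 3$ and the edge-position $i_\ast r$ falls within the middle range described below (both conditions are linear in $\kappa$). Proposition~\ref{P:Main} then inserts $A(X^{\lambda_0+1},X^{\lambda_N-1})$ as a subpath of $A(\gamma\cdot 1,\gamma\cdot X^N)$. Since $x_G$ is rigid of canonical length $r$ (Lemma~\ref{L:XRigid}(a)), the left normal form of $x_G^N$ has length $Nr$; by Lemma~\ref{L:Equivariant}, the path $A(\gamma\cdot 1,\gamma\cdot X^N)$ has length $Nr$ and its vertex at edge-position $ir$ from $\gamma\cdot 1$ is exactly $\gamma\cdot X^i$. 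Let $p_0$ be the starting edge-position of the middle subpath; since this subpath has length $(\lambda_N-\lambda_0-2)r$, the two ``wings'' have combined length at most $(4\kappa+318)r$, so $p_0$ takes only $O(\kappa)$ integer values. By the choice of $N$, $i_\ast r\in[p_0,p_0+(\lambda_N-\lambda_0-2)r]$, hence $\gamma\cdot X^{i_\ast}$ coincides with the axis-subpath vertex at relative position $i_\ast r-p_0$; writing $i_\ast r-p_0=kr+j''$ with $0\leq j''<r$ and $s_1\cdots s_r$ for the left normal form of $x_G$, this vertex is the explicit coset $x_G^{\lambda_0+1+k}\,s_1\cdots s_{j''}\,\Delta^{\mathbb Z}$.

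Consequently $\gamma\cdot X^{i_\ast}$ takes at most $O(\kappa^2)$ values (parametrized by $\lambda_0$ and $p_0$), and since the stabilizer in $G\!_{\Delta}$ of any vertex of $\mathcal C_{AL}$ is the image of $\langle\Delta\rangle$, a group of order $o(G)\leq 2$, the set of Proposition~\ref{prop2} has cardinality $O(\kappa^2)$ and is therefore finite. The main obstacle is the coarse Lipschitz estimate on $\lambda$, which couples Proposition~\ref{P:Main} with the hyperbolicity of $\mathcal C_{AL}$; once that is in hand, the rest is combinatorial bookkeeping with Garside normal forms, enabled by the rigidity of $x_G$.
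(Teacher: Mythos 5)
Your proposal is correct, and its skeleton coincides with the paper's: you establish the identical coarse Lipschitz estimate for the projection $\lambda$ (this is Proposition~\ref{P:Lipschitz}, with the same constant $2d+158$), and you combine Proposition~\ref{P:Main} with Lemma~\ref{L:Equivariant} to force the preferred path $A(\gamma\cdot 1,\gamma\cdot X^N)$ to contain a long axis subpath $A(X^{\lambda_0+1},X^{\lambda_N-1})$ whose two complementary ``wings'' have total length $O(\kappa)$. Where you genuinely diverge is the endgame. The paper compares the \emph{edge labels} of that axis subpath with those of the corresponding subpath of $A(1,X^N)$: since only the first and last factors of the normal form of $x_G$ are single atoms (Proposition~\ref{P:Lox}(ii)), the cyclic shift of labels must be trivial, so $\gamma$ carries some $X^m$ with $0\leqslant m\leqslant 4\xi+2$ to $X^{\lambda_0+1}$, and hence $\gamma=x_G^{\lambda_0+1}\tau^{-j}(x_G^{-m})\Delta^j$ in $G\!_{\Delta}$ --- an $O(\kappa)$ count. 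You instead locate the single vertex $\gamma\cdot X^{i_\ast}$ by pure position-counting along the path: it is the vertex of the axis subpath at offset $i_\ast r-p_0$, hence an explicit coset determined by the two bounded integers $\lambda_0$ and $p_0$, and you conclude using the finiteness of vertex stabilizers in $G\!_{\Delta}$. Your route has the merit of not invoking property (ii) of Proposition~\ref{P:Lox} at this stage, at the cost of a cruder $O(\kappa^2)$ bound (versus the paper's $O(\kappa)$) and of losing the explicit description of $\gamma$ as essentially a bounded power of $x_G$ modulo $\Delta$. Two harmless imprecisions worth fixing: the stabilizer of a vertex $g\Delta^{\mathbb Z}$ in $G\!_{\Delta}$ is the image of $g\langle\Delta\rangle g^{-1}$, not of $\langle\Delta\rangle$ itself (it still has order $o(G)\leqslant 2$, which is all you use); and when $\lambda_0+1<0$ the distinguished representative of $X^{\lambda_0+1}$ is $\partial x_G^{-\lambda_0-1}$ rather than $x_G^{\lambda_0+1}$, but since $r$ is a multiple of $o(G)$ the displayed coset $x_G^{\lambda_0+1+k}s_1\cdots s_{j''}\Delta^{\mathbb Z}$ is unaffected.
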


\begin{proof}[Proof of Theorem~\ref{T:WPD}, assuming Proposition~\ref{prop2}]
Fix any vertex $v$ and any $\kappa>0$ and let $d=d_{AL}(1,v)$. Then by Proposition~\ref{prop2}, there exists $N>0$ so that the set 
$$\left\{\gamma \in {G}\!_{\Delta}\ |\  d_{AL}\!\left(1,\gamma\cdot 1\right)\leqslant \kappa+2d,\  d_{AL}\!\left(x_G^{N}\cdot 1 ,(\gamma x_G^{N})\cdot 1\right)\leqslant \kappa+2d\right\}$$ 
is finite. But it is a direct consequence of the triangle inequality that this finite set contains any element $\gamma\in G\!_{\Delta}$ satisfying $d_{AL}(v,\gamma\cdot v)\leqslant \kappa$ and ${d_{AL}(x_G^N\cdot v,(\gamma x_G^N)\cdot v)\leqslant \kappa}$. 
\end{proof}

\begin{proof}[Proof of Theorem~\ref{T:Acylindrical}]
This is a direct consequence of our Theorem~\ref{T:WPD}: it says that $G/Z(G)$ satisfies (AH3) of~\cite[Theorem 1.2]{osin}; but according to this theorem, this is equivalent to acylindrical hyperbolicity (AH2). 
\hfill $\Box$ \end{proof}

\begin{remark}
\cite[Theorem 1.2]{osin} should not be misinterpreted: the implication (AH3) $\Longrightarrow$ (AH2) does not mean that the action of $G/Z(G)$ \emph{on our space} $\mathcal C_{AL}$ is acylindrical.
It would be very interesting to know whether this is the case; we conjecture that the answer is positive. This is certainly suggested by the case of the Mapping Class Group acting on the curve graph~\cite{Bowditch08}.
\end{remark}

\begin{proof}[Proof of Proposition~\ref{prop2}]
Our strategy is to prove a much stronger result -- roughly speaking, we will prove that for large enough~$N$, the only such elements~$\gamma$ are small powers of $x_G$ (there is a slight complication due to the action of $\Delta$).

To be more precise, recall from Theorem~\ref{T:QuasiGeodesics} that the preferred paths in $\mathcal C_{AL}(G)$ are unparametrized quasigeodesics: in the sequel, we let $M$ be a positive constant such that for any vertices $v,w$ in $\mathcal C_{AL}(G)$, the Hausdorff distance between $A(v,w)$ and any geodesic between $v$ and $w$ is bounded above by $M$. Indeed, Theorem~\ref{T:QuasiGeodesics} says that one can take $M=39$. 

In Definition~\ref{D:ProjToAxis} we defined a projection $\pi$ from $C_{AL}$ to the axis $\{X^k \ | \ k\in\mathbb Z\}$ (where $X^k$ stands for the vertex $x_G^k\Delta^{\Z}$ of $\mathcal C_{AL}$), given by $\pi(v)=X^{\lambda(v)}$. We first prove that this projection is coarsely Lipschitz:


\begin{proposition}\label{P:Lipschitz}
Suppose that $v,w$ are vertices of $\mathcal C_{AL}$. Then 
$$|\lambda(w)-\lambda(v)|\leqslant 2(d_{AL}(v,w)+2M+1)$$
\end{proposition}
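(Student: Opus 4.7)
The plan is to combine Proposition~\ref{P:Main}, Proposition~\ref{P:XLox}, and the unparametrised quasi-geodesic property of preferred paths (Theorem~\ref{T:QuasiGeodesics}) to show that if the two projections are far apart, then $v$ and $w$ must also be far apart.

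By symmetry of the statement, I may assume without loss of generality that $\lambda(w)\geqslant \lambda(v)$. The case $\lambda(w)-\lambda(v)\leqslant 2$ is immediate since the right-hand side of the claimed inequality is at least~$2$ (using $M\geqslant 0$). So I focus on the case $\lambda(w)\geqslant \lambda(v)+3$. Then Proposition~\ref{P:Main} applies and guarantees that the preferred path $A(v,w)$ contains, as a subpath, the preferred path $A(X^{\lambda(v)+1},X^{\lambda(w)-1})$. In particular, both vertices $X^{\lambda(v)+1}$ and $X^{\lambda(w)-1}$ lie on $A(v,w)$.

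Next I use that $A(v,w)$ is at Hausdorff distance at most $M$ from any geodesic between $v$ and $w$. Hence there exist points $p,q$ on a geodesic $[v,w]$ such that $d_{AL}(X^{\lambda(v)+1},p)\leqslant M$ and $d_{AL}(X^{\lambda(w)-1},q)\leqslant M$, and since $p,q$ lie on a geodesic of length $d_{AL}(v,w)$, we have $d_{AL}(p,q)\leqslant d_{AL}(v,w)$. Two applications of the triangle inequality then yield
$$d_{AL}\!\left(X^{\lambda(v)+1},X^{\lambda(w)-1}\right)\ \leqslant\ d_{AL}(v,w)+2M.$$

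Finally, using that the action of $G$ on $\mathcal C_{AL}$ is isometric and that $x_G^{-(\lambda(v)+1)}\cdot X^k=X^{k-\lambda(v)-1}$, I translate the left-hand side by $x_G^{-(\lambda(v)+1)}$ to rewrite it as $d_{AL}\!\left(1,X^{\lambda(w)-\lambda(v)-2}\right)$. Proposition~\ref{P:XLox} bounds this from below by $\frac{\lambda(w)-\lambda(v)-2}{2}$. Combining the two estimates gives
$$\frac{\lambda(w)-\lambda(v)-2}{2}\ \leqslant\ d_{AL}(v,w)+2M,$$
and rearranging produces the claimed bound $\lambda(w)-\lambda(v)\leqslant 2(d_{AL}(v,w)+2M+1)$.

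I do not anticipate a real obstacle: the content of the proof is already packaged in Propositions \ref{P:Main} and~\ref{P:XLox}. The only small point to watch is that Proposition~\ref{P:Main} requires a gap of at least~$3$ between the two $\lambda$-values, which is why the bound must tolerate the additive constant; this is absorbed painlessly by the ``$+1$'' inside the parentheses.
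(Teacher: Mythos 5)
Your proof is correct and follows essentially the same route as the paper's: reduce to the case $\lambda(w)-\lambda(v)\geqslant 3$, invoke Proposition~\ref{P:Main} to place $A(X^{\lambda(v)+1},X^{\lambda(w)-1})$ inside $A(v,w)$, bound $d_{AL}(X^{\lambda(v)+1},X^{\lambda(w)-1})$ above by $d_{AL}(v,w)+2M$ via the quasi-geodesic property, and below by $\tfrac{1}{2}(\lambda(w)-\lambda(v)-2)$ via Proposition~\ref{P:XLox}. The only (harmless) addition is that you spell out the translation by $x_G^{-(\lambda(v)+1)}$ before applying Proposition~\ref{P:XLox}, which the paper leaves implicit.
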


\begin{proof}
We may assume that $\lambda(w)-\lambda(v)\geqslant 3$. 
As seen in Proposition~\ref{P:Main}, it follows that $A(v,w)$ contains the subpath $A(X^{\lambda(v)+1},X^{\lambda(w)-1})$.  By definition of $M$, the triangle inequality implies that 
$$d_{AL}(X^{\lambda(v)+1},X^{\lambda(w)-1})\leqslant d_{AL}\!\left(v,w\right)+2M.$$
On the other hand, Proposition~\ref{P:XLox} says that
$$\lambda(w)-\lambda(v)-2=\lambda(w)-1-(\lambda(v)+1)\leqslant 2d_{AL}(X^{\lambda(v)+1},X^{\lambda(w)-1}).$$ 
Combining both inequalities, we obtain that 
$$\lambda(w)-\lambda(v)\leqslant 2(d_{AL}(v,w)+2M)+2,$$ 
as claimed in Proposition~\ref{P:Lipschitz}.
\end{proof}

Now, fix a $\kappa>0$. Let $\xi=\kappa+2M+1$. We also choose $N$ to be any integer with 
$N\geqslant 4\xi+3$. 
Consider an element $\gamma\in {G\!_{\Delta}}$ which satisfies 
$$\begin{cases}
d_{AL}(1,\gamma\cdot 1)\leqslant \kappa,\\
d_{AL}(x_G^{N}\cdot 1, (\gamma x_G^N)\cdot 1)\leqslant \kappa.\\
\end{cases}$$

For simplicity we shall use the following notation for vertices of~$\mathcal{C}_{AL}$: $u=\gamma\cdot 1$, $v=X^N=x_G^N\Delta^{\Z}$, and $w=\gamma\cdot v = \gamma x_G^{N}\Delta^{\Z}$.
The above conditions then read 
$$\begin{cases}
d_{AL}(1,u)\leqslant \kappa,\\
d_{AL}(v,w)\leqslant \kappa.\\
\end{cases}$$

\begin{lemma}\label{L:Subpath}
The path $A(u,w)$ contains the subpath $A(X^{\lambda(u)+1},X^{\lambda(w)-1})$.  
\end{lemma}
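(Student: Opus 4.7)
The plan is to reduce the lemma to a direct application of Proposition~\ref{P:Main}, whose hypothesis requires $\lambda(w) \geqslant \lambda(u) + 3$. So the task is to verify this inequality using the hypotheses on $u$ and $w$ together with the coarse Lipschitz bound in Proposition~\ref{P:Lipschitz}.

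First I would compute the projections of the two base vertices. For the identity vertex we have $\underline{x_G^k \cdot 1} = x_G^k$ for $k \geqslant 0$ (and $\partial x_G^{-k}$ for $k < 0$), and because the initial factor of $x_G$ is the atom $a$ while the initial factor of $\partial x_G$ is $a^{-1}\Delta$ (starting set $V\setminus\{a\}$), one checks that $x_G \preccurlyeq \underline{x_G^k \cdot 1}$ holds precisely for $k \geqslant 1$. Hence $\lambda(1) = 0$. The same reasoning, shifted, gives $\lambda(X^N) = N$, so $\lambda(v) = N$.

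Next, Proposition~\ref{P:Lipschitz} applied to the pairs $(1,u)$ and $(v,w)$, whose $\mathcal{C}_{AL}$-distances are bounded by $\kappa$, yields
\[
|\lambda(u)| \leqslant 2(\kappa + 2M + 1) = 2\xi \quad \text{and} \quad |\lambda(w) - N| \leqslant 2\xi.
\]
In particular $\lambda(u) \leqslant 2\xi$ and $\lambda(w) \geqslant N - 2\xi$. By the choice $N \geqslant 4\xi + 3$ we obtain
\[
\lambda(w) - \lambda(u) \geqslant (N - 2\xi) - 2\xi = N - 4\xi \geqslant 3.
\]

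Finally, since $\lambda(w) \geqslant \lambda(u) + 3$, Proposition~\ref{P:Main} applied to the pair of vertices $(u,w)$ gives exactly the desired conclusion: the preferred path $A(u,w)$ contains the subpath $A(X^{\lambda(u)+1}, X^{\lambda(w)-1})$. No genuine obstacle is expected here; the only delicate point is keeping track of the constants introduced in the choice of $\xi$ and $N$ so that the strict inequality $\lambda(w) - \lambda(u) \geqslant 3$ holds, and confirming the elementary computation $\lambda(1)=0$, $\lambda(X^N)=N$ from the definition of $\lambda$.
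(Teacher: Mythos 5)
Your proof is correct and follows exactly the paper's own argument: reduce to Proposition~\ref{P:Main} by establishing $\lambda(w)-\lambda(u)\geqslant 3$, which follows from applying Proposition~\ref{P:Lipschitz} to the pairs $(1,u)$ and $(v,w)$ together with $\lambda(1)=0$, $\lambda(v)=N$ and the choice $N\geqslant 4\xi+3$. The extra verification of $\lambda(1)=0$ and $\lambda(X^N)=N$ is already recorded in Example~\ref{Ex:Proj}(a), but your check of it is sound.
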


\begin{proof}
By Proposition~\ref{P:Main}, it suffices to prove that $\lambda(w)-\lambda(u)\geqslant 3$.

But by Proposition \ref{P:Lipschitz}, we have $|\lambda(w)-N|=|\lambda(w)-\lambda(v)|\leqslant 2(\kappa+2M+1)=2\xi$
and $|\lambda(u)|\leqslant 2(\kappa+2M+1)=2\xi$. 
Because of our choice of $N$, we have 
$$
\lambda(w)-\lambda(u)\geqslant N-4\xi \geqslant 4\xi+3-4\xi = 3
$$
proving Lemma~\ref{L:Subpath}. 
\end{proof}

Now, using Lemma \ref{L:Equivariant}, we see that $X^{\lambda(u)+1}=\gamma\cdot p$ and $X^{\lambda(w)-1}=\gamma\cdot q$, for some vertices $p,q$ along the preferred path $A(1,X^N)$. 
Moreover, the successive edges along the path 
$A(\gamma p,\gamma q)=A(X^{\lambda(u)+1},X^{\lambda(w)-1})$ 
wear the same labels (modulo conjugation by $\Delta$) as the edges along the path $A(p,q)$. 

Denoting by $x_1,\ldots, x_r$ the factors of the left normal form of $x_G$, the above amounts to saying that there is some 
$\epsilon\in \{0,1\}$ and an integer $j$ with $1\leqslant j\leqslant r$ such that 
the following two
$r\big(\lambda(w)-\lambda(u)-2\big)$-tuples of simple elements are equal: 
$$\big((x_1,\ldots, x_r),(x_1,\ldots, x_r),\ldots, (x_1,\ldots, x_r)\big)$$
and 
$$\Big(\big(\tau^{\epsilon}(x_j),\ldots, \tau^{\epsilon}(x_{j+r-1})\big),\big(\tau^{\epsilon}(x_j),\ldots,\tau^{\epsilon}(x_{j+r-1})\big),\ldots, \big(\tau^{\epsilon}(x_j),\ldots,\tau^{\epsilon}(x_{j+r-1})\big)\Big),$$
where the indices in the second tuple are taken modulo $r$. 
By construction of~$x_G$ this forces $j=1$: this is easily seen using (ii) of Proposition~\ref{P:Lox}. 
 
We deduce that $p$ and $q$ are vertices corresponding to some powers of $x_G$, say $p=X^m$ ($m\in\Z$) and $q=X^{m+\lambda(w)-\lambda(u)-2}$. But both of these are vertices along $A(1,X^N)$; therefore, 
$0\leqslant m$ and also $m+\lambda(w)-\lambda(u)-2\leqslant N$, which, by the inequalities in the proof of Lemma~\ref{L:Subpath}, 
implies $m\leqslant 4\xi+2$. 
In total we have $0\leqslant m\leqslant 4\xi+2$.

To conclude, we observe that the $\gamma$-action moves $X^m$ to $X^{\lambda(u)+1}$ (and we recall that $\gamma\in G\!_\Delta=G/\langle\Delta^{o(G)}\rangle$). Hence $\gamma$ is represented by $x_G^{\lambda(u)+1}\tau^{-j}(x_G^{-m})\Delta^j$ for some integer $j$; that is, if $m$ is fixed, $\gamma\in G\!_{\Delta}$ can run in a set with at most $o(G)$ elements. 
This finally implies that (if~$m$ is variable) $\gamma$ ranges over a set of 
$(4\xi+3) o(G)$ possibilities.

This completes the proof of Proposition \ref{prop2}. In summary, 
we have shown 
that for any $\kappa>0$, there exists an integer $N$, namely any $N$ satisfying 
$$N\geqslant 4\xi+3=4\kappa+8M+4+3=4\kappa+319$$
(recalling that $M=39$) such that the set 
$$\{\gamma\in G\!_{\Delta} \ | \ d_{AL}(1,\gamma\cdot 1)\leqslant \kappa, \ d_{AL}(x_G^N\cdot 1,(\gamma x_G^N)\cdot 1)\leqslant \kappa\}$$ 
has at most 
$(4\kappa+319)\cdot o(G)\leqslant 8\kappa+638$ elements.
\end{proof}

To conclude this section, recall that an element $g$ of $G\!_{\Delta}$ is called \emph{Morse} if every $(K,C)$-quasi-geodesic~$\sigma$ in the Cayley graph of~$G\!_\Delta$ with endpoints on~$\langle g\rangle$, is contained in a $\Lambda$-neighborhood of $\langle g\rangle$, 
where $\Lambda$ depends only on $K$,~$C$, and $g$ (but not on the choice of quasi-geodesic). 

\begin{corollary} Our special element $x_G$ is Morse. 
\end{corollary}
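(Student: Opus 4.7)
The plan is to deduce the Morse property directly from the fact that $x_G$ acts loxodromically and WPD on the hyperbolic graph $\mathcal C_{AL}(G)$, by invoking a general principle from the theory of acylindrically hyperbolic groups. The two crucial inputs are Proposition~\ref{P:XLox} (loxodromicity) and Theorem~\ref{T:WPD} (the WPD property); no further combinatorial work in $G$ is required beyond assembling these with a black-box result.

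More specifically, I would appeal to the following standard fact, available in the literature: if a finitely generated group $H$ acts on a Gromov-hyperbolic space $Y$ and an element $g\in H$ acts loxodromically and satisfies the WPD condition, then the cyclic subgroup $\langle g\rangle$ is virtually hyperbolically embedded in $H$ (Dahmani--Guirardel--Osin), and any hyperbolically embedded subgroup is Morse in every Cayley graph of~$H$ (Sisto). In our situation, $H=G\!_\Delta$ is finitely generated (being the quotient of the finitely generated group $G$ by its infinite cyclic center), the action on $\mathcal C_{AL}(G)$ is isometric, and $\mathcal C_{AL}(G)$ is $60$-hyperbolic by Theorem~\ref{T:QuasiGeodesics}. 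The two required hypotheses on $x_G\in G\!_\Delta$ are precisely Proposition~\ref{P:XLox} and Theorem~\ref{T:WPD}.

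The main obstacle is merely to make the citation precise: one has to point to a formulation of the above principle in a form immediately applicable here (for instance, Sisto's \emph{Quasi-convexity of hyperbolically embedded subgroups}, or the combination of \cite{osin} with the subsequent observation that the image of $\langle x_G\rangle$ is a Morse cyclic subgroup). With this citation in place, the proof reduces to a single sentence: since $x_G$ acts loxodromically and WPD on the hyperbolic graph $\mathcal C_{AL}(G)$, the element $x_G\in G\!_\Delta$ is Morse. I do not expect any technical difficulty beyond selecting and quoting the reference.
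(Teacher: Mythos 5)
Your proposal is correct and follows essentially the same route as the paper: the paper cites Osin's Theorem 1.4 (which packages the Dahmani--Guirardel--Osin result) to get that $x_G$ lies in a virtually cyclic hyperbolically embedded subgroup of $G\!_\Delta$, and then Sisto's \emph{Quasi-convexity of hyperbolically embedded subgroups} to conclude that $x_G$ is Morse. The only difference is bibliographic bookkeeping, which you correctly identify as the remaining task.
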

\begin{proof}
According to~\cite[Theorem 1.4]{osin}, the infinite-order element $x_G$ is contained in a virtually cyclic hyperbolically embedded subgroup of $G\!_{\Delta}$. According to Sisto \cite[Theorem 1]{sistohyp}, this implies that $x_G$ is Morse. 
\end{proof}

\section{Genericity in spherical type Artin groups}\label{S:Generic}

Throughout this section, let $G$ be an irreducible Artin group of spherical type, equipped with some finite generating set~$\mathcal S$.

Roughly speaking, the aim of this section is to prove that ``most'' elements of~$G$ (or ``generic'' elements, or ``random'' elements of~$G$) act loxodromically on the additional length graph $\mathcal C_{AL}(G)$.
In the case where $G$ is the braid group $G=B_n$, this is at least as strong as saying that generic braids are pseudo-Anosov (because non-pseudo-Anosov braids act elliptically on $\mathcal C_{AL}(B_n)$, see~\cite{CalvezWiest}).

To be more precise, there are several different ways to define what is meant by a ``random'' element. The two most common interpretations are
\begin{enumerate}
\item The random walk interpretation: perform a simple random walk of length~$N$ on the Cayley graph of~$G$, starting at the vertex representing the neutral element. Equivalently, choose a random word (with uniform probability) in the letters $\mathcal S\cup \mathcal S^{-1}\backslash \{1\}$ of length $N$. The ``random element'' of~$G$ is the element represented by this random word.
\item The interpretation using balls in the Cayley graph: consider the ball of radius~$N$ in the Cayley graph centered on the vertex representing the neutral element. Choose a vertex at random (with uniform probability) among the vertices in this ball.
\end{enumerate}
We are going to prove that, with either method for constructing random elements, the probability of obtaining an element which acts loxodromically on $\mathcal C_{AL}(G)$ tends to~1 exponentially quickly as $N$ tends to infinity. There is one caveat: if the ``balls in the Cayley graph'' framework is chosen, then we must use the generating set~$\mathcal S$ consisting of the simple elements in the classical Garside structure on~$G$.


\subsection{Elements obtained by a random walk are loxodromic and WPD}

In this subsection, the group~$G$ is equipped with any finite generating set~$\mathcal S$.

\begin{proposition}
The probability that an element of~$G$ obtained by a random walk of length~$N$ does not act loxodromically and WPD in $\mathcal C_{AL}(G)$ tends to zero exponentially fast as $N$ tends to infinity.
\end{proposition}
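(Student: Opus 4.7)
The plan is to reduce the statement to Sisto's theorem~\cite{SistoGeneric} on random walks and contracting elements. That theorem, in our setting, asserts that if a group $H$ acts on a metric space $X$ and contains a contracting element for this action, then for a simple random walk on~$H$ based at a finite generating set not contained in a virtually cyclic subgroup, the probability that the walk has produced an element acting as a contracting isometry of~$X$ by step~$N$ tends to $1$ exponentially fast in $N$.

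First I would project the simple random walk on $G$ to a random walk on $G\!_{\Delta}=G/Z(G)$. Since $Z(G)=\langle\Omega\rangle$ acts trivially on $\mathcal C_{AL}(G)$, an element of $G$ acts loxodromically and WPD on $\mathcal C_{AL}(G)$ if and only if its image in $G\!_{\Delta}$ does; moreover, by Theorem~\ref{T:Acylindrical} the quotient $G\!_{\Delta}$ is acylindrically hyperbolic, and in particular not virtually cyclic, so the non-degeneracy hypothesis of~\cite{SistoGeneric} is fulfilled by the projected step measure, which is supported on the image of $\mathcal S\cup\mathcal S^{-1}\setminus\{1\}$. Second I would check that the element $x_G$ of Proposition~\ref{P:Lox} is a contracting element for the action of $G\!_{\Delta}$ on $\mathcal C_{AL}(G)$: this is automatic, since $x_G$ is loxodromic on $\mathcal C_{AL}(G)$ by Proposition~\ref{P:XLox}, and in any Gromov-hyperbolic graph a loxodromic quasi-axis is strongly contracting. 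Invoking~\cite{SistoGeneric} then produces the desired exponential decay for the probability that the random-walk element fails to act as a contracting, hence loxodromic, isometry of $\mathcal C_{AL}(G)$.

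To promote loxodromy to loxodromy-plus-WPD, I would argue that any element $g$ of $G\!_{\Delta}$ whose action on $\mathcal C_{AL}(G)$ has contracting quasi-axis is Morse in $G\!_{\Delta}$ (via a standard nearest-point-projection argument applied to the coarsely Lipschitz orbit map), and hence by~\cite[Theorem~1.4]{osin} and~\cite{sistohyp} is contained in a virtually cyclic hyperbolically embedded subgroup of $G\!_{\Delta}$; this in turn forces the WPD property for the action of $g$ on $\mathcal C_{AL}(G)$ by the same scheme as in the proof of Theorem~\ref{T:WPD}. The main obstacle I anticipate is precisely this last step: one has to verify that the WPD argument of Section~\ref{S:Acylindrical}, which was tailored to the very explicit element $x_G$ (rigidity and non-absorbable markers in its normal form, as in Proposition~\ref{P:Lox}), adapts to an arbitrary element playing the same role. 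The technology needed to bypass the ad hoc construction of $x_G$ in favour of an abstract Morse/hyperbolically embedded substitute is exactly the content of the results we have just cited, so I expect the adaptation to be essentially a careful bookkeeping exercise rather than a new obstacle in principle.
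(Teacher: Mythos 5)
Your overall strategy --- reduce everything to Sisto's random walk theorem \cite{SistoGeneric} --- is exactly the paper's, and your preliminary care (passing to $G\!_{\Delta}=G/Z(G)$, noting that loxodromy and WPD on $\mathcal C_{AL}(G)$ only depend on the image in the quotient, and that $G\!_{\Delta}$ is non-elementary) is legitimate and in fact more explicit than the paper's one-line proof. The paper simply quotes the following form of Sisto's result: if a non-elementary group acts on a $\delta$-hyperbolic space with at least one WPD element, then the probability that a random walk of length $N$ produces a non-WPD element for \emph{that same action} decays exponentially. Theorem~\ref{T:WPD} supplies the required WPD element $x_G$, and the conclusion (loxodromic and WPD) follows with no further work.

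Your version introduces a genuine gap in the final promotion step. You claim that an element of $G\!_{\Delta}$ whose action on $\mathcal C_{AL}(G)$ is loxodromic (equivalently, has a contracting quasi-axis \emph{in the hyperbolic space} $\mathcal C_{AL}(G)$) is automatically Morse \emph{in the group} $G\!_{\Delta}$. This implication is false in general: if $H$ is a non-elementary hyperbolic group and $H\times\mathbb Z$ acts on the Cayley graph of $H$ via the projection, every infinite-order element of $H$ acts loxodromically, yet no such element is Morse in $H\times\mathbb Z$, and none is WPD for this action since the central $\mathbb Z$-factor acts trivially. Contraction in the target space cannot be pulled back to contraction in the Cayley graph through a merely coarsely Lipschitz orbit map; it is precisely the WPD property (not loxodromy) that bridges the two, which is why Theorem~\ref{T:WPD} is the essential input here. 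The same confusion affects your verification of Sisto's hypothesis: the ``(weakly) contracting'' condition in \cite{SistoGeneric} is taken with respect to the word metric on the group, and for the action on $\mathcal C_{AL}(G)$ it is supplied by the WPD property of $x_G$, not by Proposition~\ref{P:XLox} alone. The repair is simple: invoke the WPD form of Sisto's theorem directly (as the paper does), with Theorem~\ref{T:WPD} providing the seed element; then the entire third paragraph of your argument, including the appeal to \cite[Theorem~1.4]{osin} and \cite{sistohyp} and the proposed re-run of the Section~\ref{S:Acylindrical} argument for arbitrary elements, becomes unnecessary.
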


\begin{proof}
This is now an immediate consequence of the results of Sisto~\cite{SistoGeneric}.
Indeed, Sisto proved the following: if a non-elementary (i.e.\ not virtually cyclic) group $G$ acts on a $\delta$-hyperbolic space by isometries, with at least one element acting in a WPD manner, then the probability that a simple random walk of length~$N$ yields an element whose action is not WPD tends to zero exponentially quickly as $N$ tends to infinity.
\end{proof}


\subsection{Random elements from a ball in the Cayley graph are loxodromic}

Throughout this subsection, the group~$G$ is equipped with the generating set~$\mathcal S$ consisting of the simple elements in the classical Garside structure on~$G$.

\begin{proposition}\label{P:BallGeneric}
Consider the ball of radius~$N$ in the Cayley graph of~$G$, centered on the neutral element.
The proportion of vertices in this ball representing non-loxodromically acting elements of~$G$ tends to zero exponentially quickly as $N$ tends to infinity.
\end{proposition}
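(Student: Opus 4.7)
The plan is to combine a combinatorial sufficient condition for loxodromic action on $\mathcal C_{AL}$ with a pattern-counting argument in the regular language of Garside normal forms, in the spirit of~\cite{CarusoWiestGeneric2,WiestLox}.

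First, since the generating set $\mathcal S$ consists of the finitely many simple elements, the ball of radius~$N$ in the Cayley graph is, via Thurston's mixed canonical form $g=n^{-1}p$, in bijection with pairs $(n,p)$ of positive elements satisfying $n\wedge p=1$ and $\sup(n)+\sup(p)\leqslant N$. It thus suffices to estimate, within this collection, the proportion of pairs for which $g=n^{-1}p$ acts non-loxodromically on $\mathcal C_{AL}(G)$.

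Next, I would establish a combinatorial sufficient condition for loxodromic action: there exists an integer $k_0\geqslant 1$ such that if the left normal form of $p$ admits a factorisation $p=\alpha\cdot x_G^{k_0}\cdot \beta$ entirely in left normal form, with $\sup(\alpha)$ and $\sup(\beta)$ each at least some universal constant, then $g=n^{-1}p$ acts loxodromically on $\mathcal C_{AL}(G)$. To prove this, one iterates Proposition~\ref{P:Main}: the central block of $x_G^{k_0}$ forces the projections $\pi(g^m\cdot 1)$ to advance linearly along the quasi-axis $\{X^k\}_{k\in\Z}$, and Proposition~\ref{P:XLox} then yields positive translation length.

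The counting step relies on the fact that the set of left normal forms of positive Garside elements is a regular language, recognised by the Charney automaton whose states are the non-trivial simple elements and whose transitions correspond to left-weighted pairs. By Perron-Frobenius, the number of admissible normal-form words of length~$N$ grows as $\lambda^N$ for some $\lambda>1$. A standard product-automaton construction tracking occurrences of the forbidden pattern shows that the number of normal forms avoiding $x_G^{k_0}$ everywhere grows as $\mu^N$ with $\mu<\lambda$, yielding the desired exponential decay. The mixed case then follows by a product argument on pairs $(n,p)$ together with the constraint $n\wedge p=1$, which is itself cut out by a further regular condition.

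The main difficulty lies in the strict inequality $\mu<\lambda$: it hinges on a ``mixing'' property of the Charney automaton for irreducible Artin-Tits groups of spherical type, amounting to the statement that the pattern $x_G^{k_0}$ can be inserted into, and extracted from, generic normal forms traversing each recurrent component. This kind of subshift analysis, together with the passage from rigid factor occurrences to loxodromic behaviour, is precisely what is carried out in~\cite{CarusoWiestGeneric2} and~\cite{WiestLox}, and those results should combine to yield Proposition~\ref{P:BallGeneric}.
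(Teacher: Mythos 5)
Your overall architecture (a combinatorial sufficient condition for loxodromicity plus a pattern-counting argument in the regular language of normal forms) is the right one, and the counting half is in the spirit of what the paper does via \cite{CarusoGeneric1} and \cite{CarusoWiestGeneric2}. But there is a genuine gap in your sufficient condition: you claim that if the positive part $p$ of the mixed form contains a block $x_G^{k_0}$ flanked by buffers of bounded size, then $g=n^{-1}p$ acts loxodromically, and you justify this by ``iterating Proposition~\ref{P:Main}''. This does not work as stated. Proposition~\ref{P:Main} controls the preferred path between two vertices in terms of their projections to the axis of $x_G$; to get positive translation length for $g$ you must control the normal forms of \emph{all powers} $g^m$, and for a non-rigid element the normal form of $g^m$ is not the concatenation of $m$ copies of the normal form of $g$ --- the junction words $\beta n^{-1}$ and $p\,\alpha$ can interact with and destroy the occurrences of $x_G^{k_0}$, and $\inf(g^m)$ can grow. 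The occurrence of $x_G^{k_0}$ in $g$ therefore gives no control over $\lambda(g^m\cdot 1)$. This is precisely why the paper's criterion (Lemma~\ref{L:xSubwordLox}, resting on \cite[Lemma 3.2]{WiestLox}) requires $g$ to be \emph{rigid}: rigidity guarantees that powers are obtained by concatenating normal forms, so the prefix path of $g^m$ is an unparametrized quasi-geodesic and the subword $x_G^{390}$, which moves the basepoint by more than $5M$ by Proposition~\ref{P:XLox}, rules out elliptic and parabolic behaviour.

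The missing idea is the conjugation step: the paper first shows (following \cite[Proposition 3.4]{CarusoWiestGeneric2}) that a generically large proportion of elements of each $G^{\eta,l}$ admit a \emph{non-intrusive} conjugation to a rigid element --- one that only alters the normal form near its two ends --- and separately that a generic normal form contains $x_G^{390}$ in its \emph{middle fifth}, so that this occurrence survives the conjugation. Only then is the rigid-plus-subword criterion applied, and the ball $\mathbf{B}(N)$ is swept out as $\bigcup_{k}\bigcup_{\eta} G^{\eta,k}$. Your bounded buffers around $x_G^{k_0}$ would not survive a non-intrusive conjugation either (one needs buffers of length proportional to $l$, hence ``middle fifth''), so even repaired along these lines your condition needs strengthening. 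A secondary caveat: your Perron--Frobenius step assumes a mixing property of the left-weighted-pairs automaton that is not automatic (the automaton need not be strongly connected); the existence of \emph{blocking elements}, which Proposition~\ref{P:Lox} supplies, is what makes the density estimates of \cite{CarusoWiestGeneric2} go through uniformly in $\eta$.
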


The proof is a simple adaptation of the proof by Caruso and Wiest (\cite[Theorem 5.1]{CarusoWiestGeneric2} that ``most'' vertices in a large ball in the Cayley graph of the braid group~$B_n$ represent pseudo-Anosov elements.
Thus in the current section we must assume that the reader is familiar with that proof.

The only new ingredient in the proof of Proposition~\ref{P:BallGeneric} is the following lemma, which is of interest in its own right:

\begin{lemma}\label{L:xSubwordLox}
Suppose $g$ is a rigid element of~$G$ whose normal form contains the subword $x_G^{390}$ (where $x_G$ is the loxodromically and WPD-acting element constructed in Section~\ref{S:Lox}). Then $g$ acts itself loxodromically on~$C_{AL}(G)$.
\end{lemma}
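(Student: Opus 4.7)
The plan is to prove the quantitative statement $d_{AL}(1,g^n\cdot 1)\geqslant cn-c'$ for constants $c>0$ and $c'\geqslant 0$ independent of $n$, which immediately forces $g$ to act loxodromically on $\mathcal C_{AL}(G)$.

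First, I would reduce to a convenient special form. Because the centre $\langle\Omega\rangle$ acts trivially on $\mathcal C_{AL}(G)$ and being loxodromic is a conjugation-invariant property, I can freely conjugate $g$ and multiply by central elements. Writing $g=\alpha\cdot x_G^{390}\cdot\beta$ in left normal form (with $\alpha,\beta\in G^0$), a direct check using rigidity of $g$ shows that the conjugate $g':=\alpha^{-1}g\alpha=x_G^{390}\beta\alpha$ is again in left normal form, is still rigid, and starts with the atom $a=\iota(x_G)$. Replacing $g$ by $g'$ and renaming $w:=\beta\alpha\in G^0$, I may therefore assume $g=x_G^{390}w$ in left normal form. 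By rigidity, $g^n=(x_G^{390}w)^n$ is in left normal form for every $n\geqslant 1$, so $\underline{g^n\cdot 1}=g^n$.

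Next, I would identify useful intermediate vertices along the preferred path $A(1,g^n\cdot 1)$: for each $k\in\{0,1,\dots,n-1\}$ the path passes, in that order, through $u_k:=g^k\cdot 1$ and $u'_k:=g^k\cdot X^{390}$. Since $G$ acts on $\mathcal C_{AL}(G)$ by isometries, Proposition~\ref{P:XLox} gives
$$d_{AL}(u_k,u'_k)=d_{AL}(1,X^{390})\geqslant 195.$$
Now let $\gamma$ be any geodesic from $1$ to $g^n\cdot 1$. By Theorem~\ref{T:QuasiGeodesics}, the preferred path $A(1,g^n\cdot 1)$ lies within Hausdorff distance $M=39$ of $\gamma$. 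For each $k$ pick nearest points $\tilde u_k,\tilde u'_k\in\gamma$ of $u_k,u'_k$ respectively; the triangle inequality then gives $d_{AL}(\tilde u_k,\tilde u'_k)\geqslant 195-2M=117$ for every $k$.

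The crux is to show that the points $\tilde u_0,\tilde u'_0,\tilde u_1,\tilde u'_1,\dots,\tilde u_{n-1},\tilde u'_{n-1}$ lie along $\gamma$ in the same order (up to a bounded error) as on the preferred path. This is a standard consequence of $\delta$-hyperbolicity (here $\delta=60$): nearest-point projections from an unparameterised quasigeodesic to a geodesic with the same endpoints are coarsely monotone, with error depending only on $\delta$ and on the quasigeodesic constants. Given this coarse monotonicity, the $n$ pairs $(\tilde u_k,\tilde u'_k)$ cut out essentially disjoint sub-arcs of $\gamma$ of length at least $117$ each, yielding $d_{AL}(1,g^n\cdot 1)\geqslant cn-c'$ for explicit $c>0$ and $c'$ depending only on the hyperbolicity constant and on~$M$.

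The main obstacle will be this coarse-monotonicity step; its standard formulation applies to parameterised quasigeodesics, whereas preferred paths here are only \emph{unparameterised} quasigeodesics, so one has to check that no bounded backtracking can destroy the linear lower bound. An attractive alternative, which stays entirely inside the combinatorial Garside framework, would be to define a projection $\mu\co\mathcal C_{AL}(G)\to\mathbb Z$ onto the ``$g$-axis'' $\{g^k\cdot X^{195}\}_{k\in\mathbb Z}$ in analogy with Definition~\ref{D:ProjToAxis}, and to prove a coarse-Lipschitz bound for $\mu$ mimicking Proposition~\ref{P:Lipschitz}. This would require analogues of Lemmas~\ref{L:Well-DefinedProjection} and~\ref{L:Partial} for $g$ in place of $x_G$; the ingredients are the rigidity of $g$, the non-absorbability of $g$ and of $\partial g$ (inherited from $x_G$ and $\partial x_G$ via \cite[Lemma~2.4]{CalvezWiest}), and the nested structure provided by the $x_G^{390}$ sub-word through Lemma~\ref{Lemma:Garside}.
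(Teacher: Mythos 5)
Your overall strategy --- marking the points $u_k=g^k\cdot 1$ and $u'_k=g^k\cdot X^{390}$ along $A(1,g^n\cdot 1)$ and converting the $n$ gaps of length at least $195$ into a lower bound $d_{AL}(1,g^n\cdot 1)\geqslant cn-c'$ --- is reasonable, and it is a genuinely different route from the paper's, which disposes of the lemma in three lines by citing \cite[Lemma 3.2]{WiestLox}: a rigid element cannot act parabolically, and an elliptically acting rigid element cannot have a normal-form subword moving the base point by more than $5M=195$, which $x_G^{390}$ does by Proposition~\ref{P:XLox}. In effect you are re-proving that external lemma, and the gap in your argument sits exactly where you flag it. The only quantitative content of Theorem~\ref{T:QuasiGeodesics} that you actually invoke is the Hausdorff-distance bound $M=39$, and that bound alone does \emph{not} yield coarse monotonicity of nearest-point projections: a path can remain within distance $39$ of a geodesic while backtracking arbitrarily many times, in which case your sub-arcs of $\gamma$ overlap massively and no linear lower bound follows. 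So the assertion that the pairs $(\tilde u_k,\tilde u'_k)$ cut out essentially disjoint sub-arcs is unjustified as written, and this is the heart of the proof, not a routine verification.

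The gap is fillable, but by a different mechanism than the one you sketch. The phrase ``uniform unparameterized quasi-geodesics'' in Theorem~\ref{T:QuasiGeodesics} means (see \cite{CalvezWiest}) that there is a \emph{monotone} reparameterization making $A(1,g^n\cdot 1)$ a genuine $(K,C)$-quasigeodesic with uniform constants; once you import those constants, the parameters $t_1\leqslant t_2\leqslant\cdots\leqslant t_{2n}$ of your marked points satisfy $t_{2k+2}-t_{2k+1}\geqslant (195-C)/K$, whence $d_{AL}(1,g^n\cdot 1)\geqslant n(195-C)/K^2-C$ directly --- no geodesic $\gamma$, no projection, and no hyperbolicity needed. (Alternatively one can run a broken-geodesic argument using the fact that the subpath of a preferred path between two of its vertices is again a preferred path, but your chain has alternating gaps, some of which --- the $d_{AL}(u'_k,u_{k+1})$ --- may be arbitrarily small, so that lemma does not apply off the shelf either.) Two further small points: your reduction tacitly assumes $\inf(g)=0$; if $\inf(g)=p\neq 0$ you must first multiply by a central power of $\Delta$ and absorb the resulting $\tau^p$-twist, which may replace $x_G$ by $\tau(x_G)$ (harmless, but it should be said). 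And your proposed alternative via a projection onto the $g$-axis is more work than it looks: the analogue of Lemma~\ref{L:InitialFinalFactors} rests on \cite[Proposition 3.11]{CalvezWiest}, which uses the specific non-absorbability properties of the factors of $x_G$ and $\partial x_G$, and the non-absorbability of $\partial g$ for a general rigid $g$ containing $x_G^{390}$ is not immediate from \cite[Lemma 2.4]{CalvezWiest}.
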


\begin{proof}[Proof of the Lemma]
The proof is based on \cite[Lemma 3.2]{WiestLox}. Let us recall the setup for this result. We have the group~$G$ acting on the $\delta$-hyperbolic space~$\mathcal C_{AL}$. The group is automatic, so the language of normal form words satisfies \emph{a fortiori} the ``automatic normal form hypothesis" of~\cite{WiestLox}. Also, the family of paths in~$\mathcal{C}_{AL}$ given by
$$
\{ P, x_1.P, x_1 x_2.P, x_1x_2x_3.P, \ldots \ | \ x_1x_2x_3\ldots \text{ \ a normal form word}\}
$$
forms a family of unparameterized quasi-geodesics in~$\mathcal C_{AL}$ (where $P$ denotes the base point $\Delta^{\mathbb Z}$ of~$\mathcal C_{AL}$). In particular, the ``geodesic words hypothesis'' of~\cite{WiestLox} is satisfied.
More precisely, normal form words stay no farther than distance $M=39$ from any geodesic connecting their endpoints (Theorem~\ref{T:QuasiGeodesics}).

Now~\cite[Lemma 3.2]{WiestLox} states that a rigid element cannot act parabolically, and if it acts elliptically, then its normal form cannot contain any subword representing an element whose action moves the base point by more than $5\cdot M=195$.

However, we know from our Proposition~\ref{P:XLox} that $x_G^{2\cdot 5\cdot 39}=x_G^{390}$ moves the base point by at least $5\cdot M$.
This implies that the rigid element $g$ whose normal form contains $x_G^{390}$ acts neither parabolically, nor elliptically.
\end{proof}

\begin{proof}[Proof of Proposition~\ref{P:BallGeneric}]
The proof follows very closely the proof of the genericity of pseudo-Anosov braids in~\cite{CarusoWiestGeneric2}.

The first observation is that $G$ contains \emph{blocking elements}, in the sense of~\cite[Definition 4.4]{CarusoWiestGeneric2}.
We need not concern ourselves with the precise definition of blocking elements right now;
it suffices to know that all elements with $\inf=0$, whose left and right normal forms coincide, with last factor consisting of a single atom, and with first factor of the form $a^{-1}\Delta$, where $a$ is an atom, are blocking elements.
Proposition~\ref{P:Lox} asserts the existence of such elements; typically the second half of our loxodromic element $x_G$ does the job -- in fact, for the classical Garside structure on an irreducible Artin-Tits group of spherical type, blocking elements can be chosen so as to have canonical length at most~6.

Once we know that blocking elements exist, we can proceed almost literally as in~\cite{CarusoWiestGeneric2}.
We denote
$$
G^{\eta,l} = \{ g\in G \ | \ \inf(g)=\eta,  \ell(g)=l \}
$$
and we look at the proportion of elements of~$G^{\eta, l}$ which admit a \emph{non-intrusive conjugation} (\cite[Definition 3.1]{CarusoWiestGeneric2}) to a rigid element. We prove an analogue of~\cite[Proposition~3.4]{CarusoWiestGeneric2}: there is a lower bound on this proportion which is independent of~$\eta$, and which tends to~$1$ exponentially quickly as $l$ tends to infinity.

On the other hand, we also look at the proportion of elements of~$G^{\eta, l}$ whose normal form contains in its middle fifth the word $x_G^{390}$. We prove that, similarly,  there is a lower bound on this proportion which is independent of~$\eta$, and which tends to~$1$ exponentially quickly as $l$ tends to infinity. (This proof is based on the techniques of~\cite{CarusoGeneric1}.)

Precisely as in~\cite{CarusoWiestGeneric2}, looking at the intersection of these two subsets of $G^{\eta, l}$, we deduce that the proportion of elements admitting a non-intrusive conjugation to a rigid, loxodromically acting element tends to 1 exponentially quickly as $l$~tends to infinity.

Finally, we partition the ball of radius~$N$ centered on the neutral element in the Cayley graph of~$G$ as
$$
\mathbf{B}(N) = \ \bigcup_{k=0}^N \ \bigcup_{\eta=-N}^{N-k} \ G^{\eta, k}
$$
By a calculation which parallels that in~\cite[Proof of Theorem 5.1]{CarusoWiestGeneric2},
we deduce that the proportion of elements, among those of $\mathbf{B}(N)$, admitting a conjugation to a rigid, loxodromically-acting element, tends to~1 exponentially quickly.
\end{proof}

{\bf Acknowledgements } The first author was supported by the ``initiation to research" project no.11140090 from Fondecyt and by Dr. Andr\'es Navas through the Project USA1555 from University of Santiago de Chile.  He also acknowledges support by PIA-CONICYT ACT1415 and by MTM2010-19355 and FEDER. 

\end{document}